\documentclass[11pt]{article}

\usepackage[T1]{fontenc}
\usepackage{lmodern}
\usepackage{microtype}
\usepackage{amsmath,amssymb,amsthm}
\usepackage[numbers,sort&compress]{natbib}
\usepackage[margin=1in]{geometry}
\usepackage[hidelinks]{hyperref}

\allowdisplaybreaks
\numberwithin{equation}{section}

\theoremstyle{plain}
\newtheorem{theorem}{Theorem}[section]
\newtheorem{lemma}[theorem]{Lemma}
\newtheorem{proposition}[theorem]{Proposition}
\newtheorem{corollary}[theorem]{Corollary}
\newtheorem{problem}[theorem]{Problem}

\theoremstyle{definition}
\newtheorem{definition}[theorem]{Definition}

\theoremstyle{remark}

\newcommand{\Cay}{\operatorname{Cay}}

\title{Cyclic Hamilton Cycle Decompositions of Carousel Tournaments of Order
\texorpdfstring{$pq$}{pq}}

\author{Hongci Liao\thanks{School of Science,
	Xihua University,
	Chengdu 610039, P. R. China; \texttt{hcliao@xhu.edu.cn}}
	\and
	Yongju Peng\thanks{School of Mathematics,
	Guangxi University,
	Nanning 530004, P. R. China; \texttt{3566909596@qq.com}}
	\and
	Guang Li\thanks{School of Mathematics and Statistics,
	Shandong University of Technology,
	Zibo 255100, P. R. China; \texttt{lig@sdut.edu.cn}}
	\and
	Yingbin Ma\thanks{Corresponding author: School of Mathematics and Statistics, Henan Normal University,
	Xinxiang 453007, P.R. China; \texttt{mayingbincw@htu.cn}}}
\date{}

\begin{document}

\maketitle

\begin{abstract}
	Kelly's conjecture asks whether every regular tournament admits a Hamilton
	cycle decomposition. Motivated by its symmetry-preserving extension, we study
	cyclic Hamilton decompositions of carousel tournaments. For an odd integer $n$, let
	\[
	T_n=\Cay\left(
	\mathbb Z_n,\left\{1,2,\ldots,\frac{n-1}{2}\right\}
	\right)
	\]
	be the carousel tournament. We ask whether $T_n$ has a Hamilton decomposition
	invariant under translation by $1$. Although the answer is immediate when $n$
	is prime, composite orders introduce a genuine obstruction: nonunit differences
	generate short cycles rather than Hamilton cycles. We resolve a general
	composite-order family by proving that, whenever $n=pq$ for primes
	$7\le p<q$, the tournament $T_n$ admits a cyclic Hamilton cycle decomposition.
	The proof combines Hamiltonian difference sequences over prime fields with a
	matching argument that constructs two base paths with disjoint difference
	sets in $\mathbb Z_{pq}$.
	Thus our result gives an infinite family supporting the cyclic, symmetry-preserving
	extension of the Hamilton decomposition problem for regular tournaments.
	
\end{abstract}

\medskip
\noindent\textbf{Keywords:}
Hamilton cycle decomposition; cyclic decomposition; Cayley tournament;
Hamiltonian difference sequence; Hall's theorem.

\section{Introduction}\label{intro}

A Hamilton decomposition of a graph or digraph is a partition of its edge set into
Hamilton cycles. The problem is one of the oldest decomposition problems in graph
theory. Walecki proved that the complete graph of odd order has a Hamilton
decomposition; see~\cite{alspachWonderfulWalecki2008,alspachDecompositionCycles1990}.
In the directed setting, Tillson~\cite{tillsonHamiltonianDecomposition1980} showed
that the complete symmetric digraph admits a Hamilton decomposition in every order
other than $4$ and $6$.

A tournament is an orientation of a complete graph. A Hamilton decomposition
of a tournament can exist only when the tournament is regular, since every Hamilton
cycle contributes one incoming and one outgoing arc at each vertex. The converse is
the content of the following classical conjecture.

\medskip
\noindent\textbf{Kelly's conjecture.}
\emph{Every regular tournament has a Hamilton cycle decomposition.}
\medskip

Kelly's conjecture has played a central role in the study of Hamilton decompositions
of directed graphs; see also the survey~\cite{kuhnSurveyHamilton2012}.
K\"uhn, Osthus and Treglown~\cite{kuhnHamiltonDecompositionsRegular2010} proved
an approximate version, and K\"uhn and Osthus~\cite{kuhnHamiltonDecompositionsExpanders2013}
subsequently proved the conjecture for all sufficiently large tournaments. Their
result is considerably more general: sufficiently large regular robust outexpanders
of linear degree have Hamilton decompositions.

Hamilton decomposition problems have also been studied for Cayley graphs.
For undirected Cayley graphs, Hamilton decompositions are known in several
important cases, including degree-four Cayley
graphs~\cite{bermondHamiltonianDecomposition1989}, Cayley graphs associated with
minimal generating sets of abelian groups of odd
order~\cite{liuHamiltonianDecompositionsOdd1996}, and Paley
graphs~\cite{alspachPaleyGraphs2012}. Cayley tournaments form a natural class of
regular, vertex-transitive tournaments because every translation is an
automorphism.

The classical definition of a Hamilton decomposition is purely existential: it asks
whether the edge or arc set can be partitioned into Hamilton cycles, but it does not
record how the automorphisms of the graph act on the resulting partition. It is
therefore natural to extend the concept by requiring the decomposition itself to
preserve a prescribed part of the symmetry of the graph. Let $\Gamma$ be a regular
graph or digraph and let $H\le\operatorname{Aut}(\Gamma)$. We call a Hamilton
decomposition $\mathcal C$ of $\Gamma$ \emph{$H$-invariant} if
\[
C^h\in\mathcal C
\qquad\text{for every }C\in\mathcal C\text{ and }h\in H.
\]
Equivalently, $H$ acts on the set of Hamilton cycles of the decomposition.
Ordinary Hamilton decomposition theory corresponds to making no prescribed symmetry
requirement, while the $H$-invariant version studies the compatibility between a
decomposition and an algebraic action on the underlying graph. This leads to a
general extension of the classical problem.

\begin{problem}\label{prob:invariant}
	Given a regular graph or digraph $\Gamma$ and a subgroup
	$H\le\operatorname{Aut}(\Gamma)$, determine whether $\Gamma$ admits an
	$H$-invariant Hamilton decomposition.
\end{problem}

Even when a Hamilton decomposition of $\Gamma$ is known to exist,
Problem~\ref{prob:invariant} need not be automatic: the cycles must be organized
into complete $H$-orbits, and their stabilizers constrain the possible orbit
lengths. The problem thus retains algebraic information that is invisible in an
arbitrary decomposition. It also provides a common setting for Hamilton
decomposition problems in Cayley objects and for cyclic decompositions in design
theory.

Cyclic decompositions are usually constructed by specifying a small collection of
base cycles or base paths and developing them under translation. Their arc sets are
encoded by differences. This method goes back to cyclic cycle systems of complete
graphs and underlies the work of Buratti and Del
Fra~\cite{burattiExistenceCyclic2003,burattiCyclicHamiltonian2004}, the extension
to complete multigraphs and cocktail-party graphs~\cite{burattiCyclicLambda2010},
and the corresponding results for complete multipartite
graphs~\cite{merolaCyclicSymmetric2017,akinolaCyclicSymmetricHamilton2025}.
Jordon and Morris studied cyclic Hamilton systems of the complete graph minus a
$1$-factor and directed cyclic Hamilton systems of complete symmetric
digraphs~\cite{jordonCyclicHamiltonian2008,jordonMorrisDirectedCyclic2009}. In an
undirected graph or a symmetric digraph, both differences $d$ and $-d$ are
available. A tournament contains exactly one of them. Consequently, a difference
construction for a tournament must control not only orbit lengths and vertex
repetitions, but also the orientation of every selected arc.

For an odd integer $n$, consider the carousel tournament
\[
T_n=\Cay(\mathbb Z_n,S_n),
\qquad
S_n=\left\{1,2,\ldots,\frac{n-1}{2}\right\}.
\]
Thus the vertices of $T_n$ are the residues modulo $n$, and $x$ sends an arc
to $x+d$ for each $d\in S_n$. The tournament $T_n$ is regular, and translation
by $1$ acts regularly on its vertices. Let
\[
H_n=\{x\mapsto x+t:t\in\mathbb Z_n\}.
\]
Then $H_n$ is a regular cyclic subgroup of $\operatorname{Aut}(T_n)$. An
$H_n$-invariant Hamilton decomposition is traditionally called a \emph{cyclic
	Hamilton cycle decomposition}. Thus carousel tournaments provide a canonical and
arithmetically structured instance of Problem~\ref{prob:invariant}. The
corresponding specialization is the following.

\begin{problem}\label{prob:carousel}
	For which odd integers $n$ does the carousel tournament $T_n$ admit a cyclic
	Hamilton cycle decomposition?
\end{problem}

There is a sharp distinction between prime and composite orders. If $n$ is prime,
every $d\in S_n$ is a unit of $\mathbb Z_n$, and
\[
(0,d,2d,\ldots,(n-1)d)
\]
is a Hamilton cycle. As $d$ ranges over $S_n$, these cycles partition the arc
set of $T_n$ and are individually invariant under cyclic development. Thus
Problem~\ref{prob:carousel} is immediate for prime $n$. If $n$ is composite, a
nonunit difference $d$ has additive order strictly smaller than $n$ and
therefore generates several short cycles rather than a Hamilton cycle. Combining
these short orbits into Hamilton cycles while preserving cyclicity is the essential
difficulty.

We initiate the composite-order theory of cyclic Hamilton decompositions of
carousel tournaments by treating a general arithmetic family, namely
$n=pq$ for two distinct odd primes. Write
\[
T_{p,q}=T_{pq}=\Cay\left(
\mathbb Z_{pq},
\left\{1,2,\ldots,\frac{pq-1}{2}\right\}
\right).
\]
The nonunit differences in its connection set are precisely the positive multiples
of $p$ and $q$ below $pq/2$. A base path of length $p$ cannot contain a
multiple of $p$, while a base path of length $q$ cannot contain a multiple of
$q$. Hence the multiples of $q$ must be assigned to a base path of length $p$,
and the multiples of $p$ to one of length $q$. All remaining differences are
units and may be handled individually. The problem is therefore reduced to
constructing two base paths whose difference sets are disjoint, with prescribed
nonunit differences and prescribed endpoint divisibility.

Our main result establishes an infinite composite-order family in this
equivariant setting and provides a positive answer to Problem~\ref{prob:carousel}
for all products $pq$ with $7\le p<q$.

\begin{theorem}\label{thm:main}
	Let $p<q$ be primes with $p\ge7$. Then $T_{p,q}$ has a cyclic Hamilton cycle
	decomposition.
\end{theorem}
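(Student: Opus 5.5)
We reduce the statement to the construction sketched in the introduction, namely two base paths. Recall the mechanism. Suppose $P=(x_0,x_1,\dots,x_{m})$ is a directed path in $\mathbb Z_{pq}$ with $m=p$ or $m=q$. Suppose its consecutive differences $x_{i}-x_{i-1}$ all lie in $S_{pq}$ and are pairwise distinct. Suppose finally that $x_m-x_0=g$ generates a subgroup of index $m$, i.e.\ $g$ has additive order $pq/m$. Then the translates $P+jg$ for $j=0,\dots,pq/m-1$ concatenate into a closed walk of length $pq$. This walk visits every vertex once precisely when $\{x_0,\dots,x_{m-1}\}$ meets each coset of $\langle g\rangle$ exactly once. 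So we require $x_0,\dots,x_{m-1}$ to be pairwise distinct modulo $m$; for $m=p$ this means $\langle g\rangle=p\mathbb Z_{pq}$ and the $x_i$ form a transversal mod $p$.

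Developing this Hamilton cycle under $H_{pq}$ gives an orbit of $m$ Hamilton cycles. Together they use each arc with difference in $\Delta(P)$ exactly once. Every unit $d\in S_{pq}$ not used by the paths contributes the single orbit $(0,d,2d,\dots)$, which is the full orbit. It therefore suffices to find a path $P_1$ of length $p$ and a path $P_2$ of length $q$ with the following properties:
\begin{itemize}
\item $\Delta(P_1)\cap\Delta(P_2)=\emptyset$;
\item all positive multiples of $q$ below $pq/2$ (there are $(p-1)/2$ of them) lie in $\Delta(P_1)$;
\item all positive multiples of $p$ below $pq/2$ (there are $(q-1)/2$) lie in $\Delta(P_2)$;
\item $P_1$ is a transversal mod $p$ with endpoint sum in $p\mathbb Z_{pq}\setminus\{0\}$;
\item $P_2$ is a transversal mod $q$ with endpoint sum in $q\mathbb Z_{pq}\setminus\{0\}$.
\end{itemize}
The multiples of $q$ must go to $P_1$ because reducing mod $q$ makes them zero steps, which would break the transversal condition mod $q$ in $P_2$; the symmetric argument sends the multiples of $p$ to $P_2$.

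For $P_1$, work in $\mathbb Z_p$ via CRT, writing $d\leftrightarrow(d\bmod p,\,d\bmod q)$. The multiples $kq$, $1\le k\le (p-1)/2$, reduce mod $p$ to $\pm$ the nonzero elements of $\mathbb Z_p$ in some order, one from each pair $\{a,-a\}$. Take a Hamiltonian difference sequence in $\mathbb Z_p$: a sequence $y_0,\dots,y_{p-1}$ listing $\mathbb Z_p$ whose $p-1$ consecutive differences are prescribed up to sign pattern. Use $(p-1)/2$ of these differences for the residues of the $kq$. Realize the remaining $(p-1)/2$ steps by units $d\in S_{pq}$ with the required residue mod $p$; the mod $q$ residue is free. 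Append one final step landing in the right coset. Do the analogous construction for $P_2$ in $\mathbb Z_q$: it has $(q-1)/2$ forced multiples of $p$ plus $(q+1)/2$ further unit steps. The existence of suitable difference sequences over prime fields, with one difference from each pair $\pm a$ used twice in total or prescribed half-sets, is the number-theoretic input. I would obtain it from a zigzag/Walecki-type pattern $0,1,-1,2,-2,\dots$ modified for orientation, using $p\ge7$ to have room for adjustments.

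The main obstacle is choosing the free unit lifts so that they are distinct, lie in $S_{pq}$ (the orientation constraint: exactly one of $\pm d$), and avoid each other across $P_1$ and $P_2$. I would handle this with Hall's theorem. Form a bipartite graph between the required steps (each specifying a residue mod $p$ for $P_1$, or mod $q$ for $P_2$, up to sign) and the units of $S_{pq}$ compatible with that step. A step requiring residue $r$ mod $p$ can use any unit $d\in S_{pq}$ with $d\equiv \pm r$ (sign fixed by orientation). The number of such units is roughly $(q-1)/2$ minus a bounded number, while the total demand is at most $p+q$. A degree count should verify Hall's condition once $p\ge 7$ and $p<q$. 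Before applying Hall, the sign choices must be fixed by flipping whole direction of subpaths so each step uses the allowed orientation. I expect the small cases, where $q$ is close to $p$, to need the most care in the Hall counting. The final closing step of each path is chosen first, to fix the endpoint coset condition, and is excluded from the matching.
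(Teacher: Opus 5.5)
Your reduction matches the paper's: two base paths, with the multiples of $q$ forced into the path of length $p$, the multiples of $p$ into the path of length $q$, and the remaining units developed as singleton orbits. So does your toolkit of Hamiltonian difference sequences over $\mathbb Z_p,\mathbb Z_q$ plus Hall's theorem. However, the two steps that carry the real difficulty are not secured.

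\textbf{Endpoint condition.} The mod-$p$ half of ``endpoint in $p\mathbb Z_{pq}\setminus\{0\}$'' is automatic from the difference sequence. The substantive requirement is $x_p\not\equiv0\pmod q$. Since every $kq$ vanishes mod $q$, this residue is the sum of the residues mod $q$ of all the unit steps. Choosing the closing step first and excluding it from the matching gives no control over that sum, because Hall's theorem returns an arbitrary matching. If the sum vanishes, then $x_p=x_0$ and there is no base path; the same applies to $P_2$ modulo $p$. The paper fixes this after the matching by an exchange. Some vertex of $L$ has degree $\ge u+1>|\mathcal M|$, hence an unused neighbor $b'$, and replacing its matching edge shifts the sum by $b'-b\not\equiv0\pmod q$. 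For $D_q$ it replaces $d$ by some $d'$ in the same class mod $q$ but a different class mod $p$.

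\textbf{Hall verification.} Your count is off. A step of $P_2$ is specified modulo $q$, and a class mod $q$ contains only $(p\pm1)/2$ elements of $S$ (one fewer usable if it contains an element of $M_p$), not about $(q-1)/2$. Moreover, no degree count can beat a total demand near $p+q$. The real danger is that lifts for $P_1$, whose residues mod $q$ are unconstrained, exhaust a class mod $q$ that $P_2$ needs; for $p=7$ such a class may have only two usable units.

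The missing idea is the paper's decoupling. Match the unit lifts of $D_p$ injectively into $\mathbb Z_q^*$, so that they have pairwise distinct residues mod $q$. Each class mod $q$ then loses at most one unit, leaving at least $(p-5)/2\ge1$ (resp.\ $(p-3)/2\ge2$) choices, so $D_q$ can be filled class by class. This is exactly where $p\ge7$ is used. Even the $D_p$ matching needs a separate argument for $X=L$ when $q=p+2$, since there the minimum degree is only $|L|-1$.

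\textbf{Sign framing.} This part is misplaced. Every nonzero class mod $p$ contains roughly $q/2$ units of $S$, so unit steps carry no orientation constraint. By contrast, the residues $A_p=\{[kq]_p\}$ of the forced steps are fixed exactly, not up to sign. Also, reversing a subpath does not simply negate differences, because it changes the two boundary differences. What you actually need is a Hamiltonian difference sequence whose multiset of steps contains $A_p$ (resp.\ $A_q$). The paper's sequence $(y,z,xg,\dots,xg^{t-2})$ with omitted class $x\notin A_p$ provides this. Scaling your zigzag by $-q\cdot2^{-1}$ modulo $p$ would also make its $p-1$ path differences exactly two copies of $A_p$.
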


The proof has two main ingredients. First, we construct an explicit Hamiltonian
difference sequence over each of the prime fields $\mathbb Z_p$ and
$\mathbb Z_q$. Its partial sums visit every field element, while its difference
multiplicities have a controlled form: one nonzero residue is omitted, two occur
twice, and all remaining nonzero residues occur once. The omitted class is chosen
away from the residues occupied by the required nonunit differences. Second, a
bipartite matching lifts the prescribed multiplicities modulo $p$ to actual
differences in $S_{pq}$ having distinct residues modulo $q$. An exchange of one
matching edge enforces the endpoint condition. These distinct residues leave enough
choices to construct the second base path class by class, avoiding the first, and a
final replacement enforces its endpoint condition. The restriction $p\ge7$ is
used only in the residue-class estimates that guarantee these replacements.

The paper is organized as follows. Section~\ref{sec:preliminaries} introduces
base paths and proves the cyclic development lemma, reducing the problem to a
partition of the connection set into sufficient difference sets.
Section~\ref{sec:long-difference-sets} constructs the required prime-field
difference sequences and lifts them to disjoint sufficient difference sets of
types $p$ and $q$. Section~\ref{sec:main-proof} then completes the proof of
Theorem~\ref{thm:main}.

\section{Preliminaries}\label{sec:preliminaries}

Throughout this paper, $\mathbb Z_n$ denotes the cyclic group of integers
modulo $n$, written additively, and is identified with the set
$\{0,1,\dots,n-1\}$. All arithmetic in $\mathbb Z_n$ is understood modulo $n$.
For each $v\in\mathbb Z$, let $[v]_n$ denote the element in $\mathbb Z_n$
corresponding to $v$ modulo $n$. We write
$\mathbb Z_n^*=\mathbb Z_n\setminus\{0\}$ for the nonzero elements and
$\mathbb Z_n^\times$ for the units of $\mathbb Z_n$.

For $t\in\mathbb Z_n$ and any subset, path, cycle, or subgraph $X$ on vertices in $\mathbb Z_n$, let $X+t$ denote the object obtained by adding $t$ to each of its vertices. This operation
is called \emph{translation by $t$}; in particular, an arc $(u,v)$ is translated to $(u+t,v+t)$. For a family $\mathcal C$ of cycles, define $\mathcal C+t=\{C+t:C\in\mathcal C\}.$ A Hamilton cycle decomposition $\mathcal C$ is called \emph{cyclic} if $ C+1\in\mathcal C$ for every $C\in\mathcal{C}.$

Let $n$ be odd and let $S\subseteq\mathbb Z_n\setminus\{0\}$ satisfy
\[
S\cap(-S)=\emptyset,
\qquad
S\cup(-S)=\mathbb Z_n\setminus\{0\}.
\]
The \emph{Cayley tournament} $\Cay(\mathbb Z_n,S)$ has
vertex set $\mathbb Z_n$, with an arc $(x,y)$ if and only if
$y-x\in S$. The element $y-x$ is called the \emph{difference} of the
arc $(x,y)$. Since translations preserve arc differences, every map
$x\mapsto x+t$ is an automorphism of
$\Cay(\mathbb Z_n,S)$.

In this paper, we consider $T_{p,q}=\Cay\left(\mathbb Z_{pq},\left\{1,2,\ldots,\frac{pq-1}{2}\right\}\right),$ where $7\leq p<q$ are distinct odd primes.

\subsection{Directed base path and cyclic developments}\label{base-path}

We now introduce the local construction used to generate Hamilton cycles by
translation. Let $d$ be a positive integer, and let $P$ be a directed path of
length $d$ with arcs $(v_0,v_1),(v_1,v_2),\dots,(v_{d-1},v_d)$, where
$v_0,v_1,\dots,v_d\in\mathbb Z_n$ are distinct. We write
$P=(v_0,v_1,\ldots,v_d)$. Let the multiset $\Delta(P)$ be defined by
\[
\Delta(P)=\{\delta_1,\delta_2,\dots,\delta_d\},
\qquad
\delta_i=v_i-v_{i-1}\quad (1\leq i\leq d),
\]
where $\delta_i$ is the directed difference of the arc $(v_{i-1},v_i)$.
We say that $\Delta(P)$ is \emph{simple} if it contains no repeated elements.

Let $T=\Cay(\mathbb Z_n,S)$ be a Cayley tournament. Let
$P=(v_0,v_1,\ldots,v_d)$ be a directed path in $T$ with $v_0=0$, and suppose
that $d$ divides $n$. Then $P$ is a \emph{directed base path of length $d$} if
the following conditions hold:
	\begin{enumerate}
		\item[(1)] $\{[v_1]_d, [v_2]_d,\dots,[v_{d-1}]_d\}=\mathbb Z_d^*;$
		
		\item[(2)] $\Delta(P)\subseteq S$ and is simple;
		
		\item[(3)] $\gcd(v_d,n)=d.$
	\end{enumerate}

The next lemma is the main development principle underlying our construction.

\begin{lemma}\label{bpd}
	Let $P=(v_0,v_1,\dots,v_d)$ be a directed base path of length $d$, and put $h=v_d$. Then the paths
	\[
	P,\ P+h,\ P+2h,\ldots,
	P+\left(\frac nd-1\right)h
	\]
	form a directed Hamilton cycle, denoted by $C(P)$.
	Moreover, $\mathcal{C}(P)=\{C(P)+t:0\le t<d\}$ consists of $d$ Hamilton cycles and covers precisely all arcs whose differences belong to $\Delta(P)$, each exactly once. In particular,
	$\mathcal{C}(P)$ is invariant under translation by $1$.
\end{lemma}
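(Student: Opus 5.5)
We establish three things in order: the concatenated translates form a single directed Hamilton cycle; the $d$ translates of that cycle are pairwise distinct; and together they use every arc with difference in $\Delta(P)$ exactly once.

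\emph{Hamilton cycle.} Since $\gcd(h,n)=d$, the element $h$ has additive order $n/d$ in $\mathbb Z_n$, and $\langle h\rangle=d\mathbb Z_n$. Hence the multiples $0,h,2h,\dots,(n/d-1)h$ are exactly the elements of $d\mathbb Z_n$, each listed once. The path $P+jh$ runs from $jh$ to $(j+1)h$, so consecutive translates share endpoints. Because $(n/d)h=0$, the concatenation is a closed directed walk of length $d\cdot(n/d)=n$.

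It remains to see that this walk visits each vertex once. Condition~(1) says that $v_1,\dots,v_{d-1}$ have distinct nonzero residues mod $d$. Also $v_0=0$ and $v_d=h$ are both $\equiv0\pmod d$. A vertex $v_i+jh$ with $1\le i\le d-1$ has residue $[v_i]_d$, since $d\mid h$ (which holds because $d\mid n$). Fix a nonzero residue class $r$ mod $d$ and let $v_i$ be the unique internal vertex with $[v_i]_d=r$. The vertices of the walk in class $r$ are then exactly $v_i+jh$ for $0\le j<n/d$. These are distinct, and there are $n/d$ of them, which is the size of the class. The vertices in class $0$ are the multiples $jh$, which cover $d\mathbb Z_n$. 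Therefore every vertex of $\mathbb Z_n$ appears exactly once, and $C(P)$ is a Hamilton cycle. Its arcs have differences in $\Delta(P)\subseteq S$, so it is a cycle of $T$.

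\emph{Arc covering and distinctness.} Fix $\delta=\delta_i\in\Delta(P)$. In $C(P)$ the arcs with difference $\delta$ are exactly $(v_{i-1}+jh,\,v_i+jh)$ for $0\le j<n/d$; this uses that $\Delta(P)$ is simple, so only the $i$-th arc of each translate has difference $\delta$. Their tails form the coset $v_{i-1}+d\mathbb Z_n$. Consequently, in $C(P)+t$ for $0\le t<d$, the tails of the difference-$\delta$ arcs form $v_{i-1}+t+d\mathbb Z_n$. As $t$ runs over $0,\dots,d-1$, these cosets partition $\mathbb Z_n$. Every arc of difference $\delta$ is determined by its tail, so each such arc is covered exactly once. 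Each cycle contributes $n$ arcs and there are $d$ cycles, while $|\Delta(P)|\cdot n=dn$ arcs have differences in $\Delta(P)$. So the count matches, and no arc is covered twice. In particular the cycles $C(P)+t$, $0\le t<d$, are pairwise distinct (they are even arc-disjoint), so $\mathcal C(P)$ consists of exactly $d$ cycles.

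\emph{Invariance.} Since $C(P)+h=C(P)$ and $d\mid h$, we compute $C(P)+d$ by writing $d=\lambda h$ with $\lambda\in\mathbb Z$. Such a $\lambda$ exists because $d\in\langle h\rangle$. Hence $C(P)+d=C(P)$, and translation by $1$ permutes $\{C(P)+t: 0\le t<d\}$ cyclically.

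The main obstacle is the vertex-distinctness argument, in particular the bookkeeping of residue classes mod $d$ via condition~(1) together with the fact that $\gcd(h,n)=d$ forces $\langle h\rangle=d\mathbb Z_n$. The remaining steps are routine counting.
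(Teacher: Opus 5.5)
Your proof is correct and follows essentially the paper's route: the order of $h$ and residues mod $d$ give the Hamilton property, and the tail cosets $v_{i-1}+t+d\mathbb Z_n$ give the covering. The only difference is that you get distinctness of the $d$ translates from the disjointness of these cosets and invariance from $d\in\langle h\rangle$, whereas the paper computes the translation stabilizer $d\mathbb Z_n$ directly. One small misattribution: $d\mid h$ follows from $\gcd(h,n)=d$, while $d\mid n$ is what makes reduction mod $d$ well defined on $\mathbb Z_n$.
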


\begin{proof}
	Since $\gcd(h,n)=d$, the additive order of $h$ in $\mathbb Z_n$ is
	$n/d$, and $\langle h\rangle=d\mathbb Z_n$. The terminal vertex of $P+jh$
	is the initial vertex of $P+(j+1)h$, while the final path returns to $0$.
	Consequently, the paths concatenate to form a directed cycle.
	
	The vertices appearing as initial vertices of its arcs are $v_i+jh,$ with $0\le i<d, 0\le j<\frac nd.$ Suppose that $v_i+jh=v_{i'}+j'h.$ Since $[v_0]_d,[v_1]_d, [v_2]_d,\dots,[v_{d-1}]_d$ are pairwise distinct, it follows that $i=i'$. As $h$ has order $n/d$, we conclude that $j=j'$.
	Thus the displayed vertices are pairwise distinct and therefore $C(P)$ is a directed Hamilton cycle.
	
	For a fixed $\delta_i\in\Delta(P)$, the arcs of difference $\delta_i$ in $C(P)$ have their initial vertices in the coset $v_{i-1}+\langle h\rangle=v_{i-1}+d\mathbb Z_n.$ Because the differences $\delta_1,\ldots,\delta_d$ are distinct, any translation fixing $C(P)$ must preserve this coset. Hence the translation stabilizer of $C(P)$ is exactly $d\mathbb Z_n$, and the translation orbit of $C(P)$ has size $d$. Its distinct members are
	therefore $\{C(P),C(P)+1,\ldots,C(P)+(d-1)\}.$
	
	As $t$ ranges over $0,1,\ldots,d-1$, the cosets $v_{i-1}+t+d\mathbb Z_n$ partition $\mathbb Z_n$. It follows that $\mathcal{C}(P)$ contains exactly one arc $(v, v+\delta_i)$ for every $v\in\mathbb Z_n$. This holds independently for every $\delta_i\in\Delta(P)$, proving the covering assertion. Since $\mathcal{C}(P)$ is a full translation orbit, it is invariant under translation by $1$.

\end{proof}

A set $D\subseteq S$ is called a \emph{sufficient difference set of type $r$} if there exists a directed base path $P$ of length $r$ such that $\Delta(P)=D$.
We denote such a set by $D_r$.
The next lemma expresses the terminal vertex condition of a directed base path in terms of the sum of its differences.

\begin{lemma}\label{es}
	Let $r$ be a divisor of $n$, and let $D_r$ be a sufficient difference set of type $r$.
	Then
	\[
	\gcd\left(\sum_{\delta\in D_r}\delta,n\right)=r.
	\]
	Equivalently, the element $\sum_{\delta\in D_r}\delta$ has additive order $n/r$ in $\mathbb Z_n$.
	In particular, if $s$ is a prime divisor of $n$ such that $s\nmid r$, then
	\[
	\sum_{\delta\in D_r}\delta\not\equiv0\pmod s.
	\]
\end{lemma}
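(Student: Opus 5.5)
The argument is a direct unwinding of the definitions, so I will simply trace the terminal vertex of a base path back to the sum of its differences. By definition of a sufficient difference set of type $r$, there is a directed base path $P=(v_0,v_1,\ldots,v_r)$ of length $r$ with $v_0=0$ and $\Delta(P)=D_r$. The first step is to telescope the differences:
\[
\sum_{i=1}^{r}\delta_i=\sum_{i=1}^{r}(v_i-v_{i-1})=v_r-v_0=v_r .
\]
One small point needs care here: $\Delta(P)$ is a multiset, but condition (2) says it is simple. Hence the sum over the set $D_r$ coincides with the sum over the arcs of $P$, with each difference counted exactly once.

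Condition (3) in the definition of a directed base path then gives $\gcd(v_r,n)=r$, which is exactly the first claim. Since $\gcd(x,n)$ depends only on the residue of $x$ modulo $n$, it does not matter whether the sum is read in $\mathbb Z$ or in $\mathbb Z_n$. For the equivalent formulation, I will use the standard fact that an element $x\in\mathbb Z_n$ has additive order $n/\gcd(x,n)$. So $\gcd(x,n)=r$ holds if and only if $x$ has order $n/r$.

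For the final assertion, let $s$ be a prime divisor of $n$ with $s\nmid r$. Suppose, for contradiction, that $s$ divides the sum. Then $s$ divides both the sum and $n$, so $s$ divides their gcd, which equals $r$. This contradicts $s\nmid r$.

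There is no real obstacle. The only step needing attention is the simplicity of $\Delta(P)$, which is what allows the sum over $D_r$ to be identified with the telescoping sum $v_r$.
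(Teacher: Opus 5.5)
Your proposal is correct and follows the paper's proof essentially verbatim. Both arguments telescope the differences to get $v_r$, invoke condition (3) for $\gcd(v_r,n)=r$, and obtain the prime-divisor statement by contradiction via $s\mid\gcd(\cdot,n)=r$; your remarks on the simplicity of $\Delta(P)$ and on reading the sum in $\mathbb Z$ versus $\mathbb Z_n$ make explicit what the paper leaves implicit.
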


\begin{proof}
	Let $P=(v_0,v_1,\ldots,v_r)$ be a directed base path realizing $D_r$, and write
	$\delta_i=v_i-v_{i-1}$ for $1\le i\le r$.
	Since $v_0=0$, we have
	\[
	\sum_{\delta\in D_r}\delta
	=
	\sum_{i=1}^{r}(v_i-v_{i-1})
	=
	v_r.
	\]
	The definition of a directed base path gives $\gcd(v_r,n)=r,$ which proves the first assertion. Hence the additive order of $v_r$ in $\mathbb Z_n$ is $n/r$.
	
	Now let $s$ be a prime divisor of $n$ with $s\nmid r$.
	If $\sum_{\delta\in D_r}\delta\equiv0\pmod s,$ then $s$ would divide both $n$ and
	$\sum_{\delta\in D_r}\delta$, and therefore
	\[
	s\mid
	\gcd\left(\sum_{\delta\in D_r}\delta,n\right)
	=
	r,
	\]
	contrary to $s\nmid r$.

\end{proof}

In particular, for distinct primes $p$ and $q$, sufficient difference sets of types $p$ and $q$ must
satisfy, respectively,
\[
\sum_{d\in D_p}d\equiv0\pmod p,
\qquad
\sum_{d\in D_p}d\not\equiv0\pmod q,
\]
and
\[
\sum_{d\in D_q}d\equiv0\pmod q,
\qquad
\sum_{d\in D_q}d\not\equiv0\pmod p.
\]
The nonzero congruence is essential. If the sum vanished modulo both
$p$ and $q$, then the terminal vertex would equal the initial vertex,
and translation by the terminal vertex could not join the translated
copies of the path into a Hamilton cycle.

A collection $\mathcal D$ of sufficient difference sets is called a
\emph{complete system of sufficient difference sets} for $S$ if
$\mathcal D$ is a partition of $S$.
The relevance of a complete system of sufficient difference sets is given by the following direct consequence of Lemma~\ref{bpd}.

\begin{corollary}\label{css}
	Let $\mathcal D=\{D_{r_1},\ldots,D_{r_k}\}$ be a complete system of sufficient difference sets for $S$. For each $i\in\{1,\ldots,k\}$, let $P_{r_i}$ be a directed base path realizing $D_{r_i}$, so that $
	\Delta(P_{r_i})=D_{r_i}.$ Then $\bigcup_{i=1}^k\mathcal{C}(P_{r_i})$ is a cyclic Hamilton cycle decomposition of $\Cay(\mathbb Z_n,S)$.
\end{corollary}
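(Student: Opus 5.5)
The plan is to read Corollary~\ref{css} off Lemma~\ref{bpd}, applied to each base path $P_{r_i}$ separately, and then combine the families using the partition property of $\mathcal D$. No new construction is needed. The argument only tracks arcs by their differences.

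\textbf{Step 1.} Fix $i$. Since $D_{r_i}$ is a sufficient difference set of type $r_i$, the path $P_{r_i}$ is a directed base path of length $r_i$ in $\Cay(\mathbb Z_n,S)$, and $r_i\mid n$. Lemma~\ref{bpd} then gives three facts about $\mathcal C(P_{r_i})$:
\begin{enumerate}
\item[(a)] it consists of $r_i$ directed Hamilton cycles;
\item[(b)] these cycles cover every arc whose difference lies in $\Delta(P_{r_i})=D_{r_i}$, each exactly once, and cover no other arc;
\item[(c)] it is invariant under translation by $1$.
\end{enumerate}
Fact (b) holds because every arc of a cycle in $\mathcal C(P_{r_i})$ is a translate of an arc of $P_{r_i}$. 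Hence its difference lies in $D_{r_i}$.

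\textbf{Step 2 (exact cover).} Every arc $(x,y)$ of $\Cay(\mathbb Z_n,S)$ has difference $y-x\in S$. Since $\mathcal D$ partitions $S$, this difference lies in exactly one block $D_{r_i}$. By Step~1(b), the arc is covered exactly once by $\mathcal C(P_{r_i})$ and not at all by $\mathcal C(P_{r_j})$ for $j\ne i$. So the union $\bigcup_i\mathcal C(P_{r_i})$ covers every arc exactly once.

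The families are also pairwise disjoint as sets of cycles. A cycle in $\mathcal C(P_{r_i})$ uses only differences in $D_{r_i}$, and these blocks are disjoint and nonempty. Therefore the union is a partition of the arc set into Hamilton cycles, that is, a Hamilton cycle decomposition.

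\textbf{Step 3 (cyclicity).} Each $\mathcal C(P_{r_i})$ is closed under $C\mapsto C+1$ by Step~1(c). A union of families each closed under this map is itself closed under it. Hence the decomposition is cyclic.

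I expect no real obstacle here. The only point needing care is the converse half of the covering statement in Step~1(b): the cycles of $\mathcal C(P_{r_i})$ use no differences outside $D_{r_i}$. This is what makes the union an exact cover rather than merely a covering, and it follows directly because all cycles in $\mathcal C(P_{r_i})$ are built from translates of $P_{r_i}$.
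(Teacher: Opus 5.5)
Your proposal is correct and matches the paper's approach: the paper gives no separate proof and presents Corollary~\ref{css} as a direct consequence of Lemma~\ref{bpd}, applied to each base path, with the resulting translation-invariant families combined through the partition of $S$. Your explicit observation that the cycles of $\mathcal C(P_{r_i})$ use only differences from $D_{r_i}$, because they are built from translates of $P_{r_i}$, is what makes the union an exact cover; the paper leaves this implicit in the word ``precisely'' in Lemma~\ref{bpd}.
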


\subsection{The arithmetic structure of the connection set} \label{ds}

We now specialize the preceding framework to $n=pq$ and $S=\left\{1,2,\ldots,\displaystyle\frac{pq-1}{2}\right\}$, where $7\le p<q$ are distinct odd primes. The nonunit differences in $S$ are precisely the multiples of $p$ and $q$. Define
\[
M_p
=
\left\{p,2p,\ldots,\frac{q-1}{2}p\right\}\quad \text{and} \quad M_q
=
\left\{q,2q,\ldots,\frac{p-1}{2}q\right\}.
\]
Then $|M_p|=\displaystyle\frac{q-1}{2}, |M_q|=\displaystyle\frac{p-1}{2},$
and $M_p\cap M_q=\emptyset$. Hence $S=M_p\cup M_q\cup\bigl(S\cap\mathbb Z_{pq}^{\times}\bigr).$

For every $u\in S\cap\mathbb Z_{pq}^{\times}$, the path $P_u=(0,u)$ is a directed base path of length $1$. Thus every unit difference can be realized individually, and only the differences in
$M_p\cup M_q$ need to be incorporated into directed base paths of greater length.

The following lemma shows that $D_r$ contains no multiple of $r$.

\begin{lemma}\label{fp}
	Let $r>1$ divide $n$, then no element of $D_r$ is divisible by $r$. Consequently, suppose that $n=pq$ and that a complete system of sufficient difference sets consists of $D_p$, $D_q$ and sufficient singleton sets. Then $M_p\subseteq D_q$ and $M_q\subseteq D_p.$
\end{lemma}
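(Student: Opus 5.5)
The first assertion follows from condition (1) in the definition of a directed base path, together with the fact that $v_0=0$. Let $P=(v_0,v_1,\ldots,v_r)$ realize $D_r$, and suppose some arc difference $\delta_i=v_i-v_{i-1}$ satisfies $r\mid\delta_i$ for some $1\le i\le r$. Then $[v_i]_r=[v_{i-1}]_r$. I will split according to the position of $i$.
\begin{itemize}
\item If $2\le i\le r-1$, both $v_{i-1}$ and $v_i$ lie among $v_1,\ldots,v_{r-1}$. Condition (1) says these $r-1$ residues are exactly the $r-1$ elements of $\mathbb Z_r^*$, so they are pairwise distinct. This is a contradiction.
\item If $i=1$, then $[v_1]_r=[v_0]_r=0$, contradicting $[v_1]_r\in\mathbb Z_r^*$.
\item If $i=r$, condition (3) gives $\gcd(v_r,n)=r$, so $r\mid v_r$ and $[v_r]_r=0$. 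Then $[v_{r-1}]_r=0$, again contradicting condition (1).
\end{itemize}
(When $r=2$ the middle case is empty and the argument still applies; in our setting $r$ is odd in any case.) Hence no element of $D_r$ is divisible by $r$.

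For the consequence, take $n=pq$ and a complete system consisting of $D_p$, $D_q$ and sufficient singleton sets. First I will show that a singleton sufficient difference set $\{\delta\}$ must consist of a unit. A singleton corresponds to a base path of length $1$, and Lemma~\ref{es} gives $\gcd(\delta,n)=1$. The remaining possibility to rule out is a length-$r$ path with a single difference. This cannot occur because $\Delta(P)$ has exactly $r$ elements and is simple. In particular, $|D_r|=r$, so the sets $D_p$ and $D_q$ have the types their lengths indicate.

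Therefore every nonunit element of $S$, namely every element of $M_p\cup M_q$, lies in $D_p\cup D_q$. Elements of $M_q$ are multiples of $q$. By the first part they cannot lie in $D_q$, so they lie in $D_p$. Symmetrically, elements of $M_p$ are multiples of $p$, cannot lie in $D_p$, and so lie in $D_q$. This gives $M_p\subseteq D_q$ and $M_q\subseteq D_p$.

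The only delicate step is the endpoint case $i=r$, where condition (3) must be invoked rather than condition (1). The rest is bookkeeping about which sets in the partition can hold nonunits.
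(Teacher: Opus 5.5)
Your proof is correct and follows the paper's argument. Condition (1) together with $[v_0]_r=0$ excludes $r\mid\delta_i$ for $1\le i<r$, and condition (3) handles the final arc $\delta_r$. A singleton sufficient set comes from a length-$1$ base path with $\gcd(v_1,n)=1$, which forces every nonunit into $D_p\cup D_q$; your explicit use of simplicity to show that a singleton must have type $1$ is a small point the paper leaves implicit.
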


\begin{proof}
	Let $P=(v_0,v_1,\ldots,v_r)$ be a directed base path realizing $D_r$, and write $\delta_i=v_i-v_{i-1},$ where $1\le i\le r.$
	For $1\le i<r$, the residues $[v_{i-1}]_r$ and $[v_i]_r$ are distinct, and hence $\delta_i\not\equiv0\pmod r.$
	Moreover, $\gcd(v_r,n)=r$ implies that $v_r\equiv0\pmod r,$ whereas $v_{r-1}\not\equiv0\pmod r$. Thus $\delta_r\not\equiv0\pmod r.$
	It follows that no difference in $D_r$ is divisible by $r$.
	
	In particular, $D_p$ contains no multiple of $p$, while $D_q$ contains no multiple of $q$. Furthermore, a difference can be realized by a directed base path of length $1$ only if it is a unit of $\mathbb Z_{pq}$.
	Therefore, the differences in $M_p$ can occur neither in $D_p$ nor in a sufficient singleton set and must belong to $D_q$.
	Similarly, every difference in $M_q$ must belong to $D_p$.

\end{proof}

By Lemma~\ref{fp}, the differences in $M_p$ must lie in $D_q$,
whereas those in $M_q$ must lie in $D_p$. It therefore suffices to
construct disjoint sufficient difference sets $D_p,D_q\subseteq S$
such that $M_q\subseteq D_p,$ $|D_p|=p$ and $M_p\subseteq D_q,$ $|D_q|=q.$

For any such pair, every element of $R=S\setminus(D_p\cup D_q)$ is a unit.
Since each singleton $\{u\}$ with $u\in R$ is a sufficient difference set, the family
\[
\{D_p,D_q\}\cup\bigl\{\{u\}:u\in R\bigr\}
\]
is a complete system of sufficient difference sets for $S$.

The main step in constructing $D_p$ and $D_q$ is to arrange the elements of each set into the difference sequence of a directed base path.
We first study the corresponding ordering problem modulo a prime using Hamiltonian difference sequences.

\subsection{A Hamiltonian difference sequence over prime fields}

Encoding a directed Hamilton cycle by its successive differences is
standard and is closely related to rainbow-difference cycles and
directed rotational terraces; see, for example, \citet{levSumsDifferencesHamiltonian2010}.
We use this encoding here for a more specific purpose: we need to control exactly how often
each nonzero residue occurs as a difference.

\begin{definition}
	Let $t$ be an odd prime. A sequence $\boldsymbol{\rho}=(\rho_1,\ldots,\rho_t)$
	of elements of $\mathbb Z_t^*$ is called a	\emph{Hamiltonian difference sequence} if its partial sums
	\[s_0=0,
	\qquad
	s_i=\sum_{j=1}^{i}\rho_j
	\quad (1\le i\le t)
	\]
	satisfy $s_t=0$ and $s_0,s_1,\ldots,s_{t-1}$ are pairwise distinct.
\end{definition}

Since $\mathbb Z_t$ has $t$ elements, the partial sums $s_0,s_1,\ldots,s_{t-1}$ then visit every element of $\mathbb Z_t$ exactly once before returning to $0$.
Equivalently, $\boldsymbol{\rho}$ is the sequence of successive differences of a directed Hamilton cycle on $\mathbb Z_t$.

The following lemma gives an explicit Hamiltonian difference sequence.

\begin{lemma}\label{pfp}
	Let $t\ge5$ be a prime, let $g$ be a primitive root modulo $t$, and choose $y\in\mathbb Z_t^*$. Define $z=-gy$ and $x=y+z=(1-g)y$ in $\mathbb Z_t$. Then
	\[
	\boldsymbol{\rho}_t(y,g)
	=
	\bigl(y,z,xg,xg^2,\ldots,xg^{t-2}\bigr)
	\]
	is a Hamiltonian difference sequence on $\mathbb Z_t$. Moreover, $x,y,z$ are pairwise distinct.
\end{lemma}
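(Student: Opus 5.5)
The plan is to compute the partial sums of $\boldsymbol{\rho}_t(y,g)$ in closed form and check that they run through $\mathbb Z_t$ exactly once. First note the sequence has length $2+(t-2)=t$. Its entries lie in $\mathbb Z_t^*$ because $y\neq0$, $g\neq0$, and $x=(1-g)y\neq0$; the last holds since a primitive root with $t\ge5$ satisfies $g\neq1$.

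The partial sums are $s_0=0$, $s_1=y$ and $s_2=y+z=x$. For $2\le i\le t$ we have
\[
s_i=x\bigl(1+g+g^2+\cdots+g^{i-2}\bigr).
\]
The geometric-series identity $(1-g)(1+\cdots+g^{k})=1-g^{k+1}$, together with $x=(1-g)y$, turns this into
\[
s_i=y\bigl(1-g^{i-1}\bigr)\qquad(2\le i\le t).
\]
The same formula also gives $s_1=y(1-g^0)+y$? Checking directly, $y(1-g^0)=0\neq s_1$, so $s_1=y$ and $s_0=0$ are handled separately.

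For $i=t$ the formula gives $s_t=y(1-g^{t-1})=0$ by Fermat. For $2\le i\le t-1$ the exponent $i-1$ ranges over $1,\dots,t-2$, so $g^{i-1}$ runs over $\mathbb Z_t^*\setminus\{1\}$ without repetition, because $g$ is primitive. Hence $1-g^{i-1}$ runs over $\mathbb Z_t\setminus\{0,1\}$, and $s_2,\dots,s_{t-1}$ are exactly the $t-2$ elements of $\mathbb Z_t\setminus\{0,y\}$, each once. Adding $s_0=0$ and $s_1=y$ shows that $s_0,\dots,s_{t-1}$ are pairwise distinct, so the sequence is Hamiltonian.

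It remains to show that $x,y,z$ are pairwise distinct.
\begin{itemize}
\item[(a)] $x=y$ would force $z=0$, which is impossible since $z=-gy\neq0$.
\item[(b)] $x=z$ would force $y=0$, which is impossible.
\item[(c)] $y=z$ means $y=-gy$, i.e.\ $g=-1$. But $-1$ has order $2$, while $g$ has order $t-1\ge4$, so $g\neq-1$.
\end{itemize}
This is where $t\ge5$ is needed; for $t=3$ one has $g=2=-1$.

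There is no substantive obstacle. The only care needed is to treat $s_0$ and $s_1$ separately from the closed form, to check that the entries are nonzero, and to use primitivity both for distinctness of the powers and to exclude $g=\pm1$.
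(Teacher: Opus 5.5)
Your proof is correct and is essentially the paper's argument. The paper takes the partial sums $u_j=x\frac{g^j-1}{g-1}=y(1-g^j)$ of the pure geometric sequence, notes that they miss exactly $y$, and then splits the first arc $0\to x$ into $0\to y\to x$; this is the same computation as your closed form $s_i=y(1-g^{i-1})$ for $i\ge2$ together with $s_0=0$ and $s_1=y$, and your argument that $x,y,z$ are pairwise distinct matches the paper's. The only thing to tidy is the stray aside about whether the closed form covers $s_1$, which should just be replaced by the statement that $s_0$ and $s_1$ are handled directly.
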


\begin{proof}
	Since $g$ has order $\varphi(t)=t-1\ge4$, we have $g\notin\{1,-1\}$.
	Hence $x=(1-g)y$ and $z=-gy$ are both nonzero.
	Moreover, $y=z$ would imply $g=-1$, while $x=y$ would imply $z=0$, and $x=z$ would imply $y=0$.
	Thus $x,y,z$ are pairwise distinct.
	
	Consider first the geometric difference sequence
	\[
	x,xg,xg^2,\ldots,xg^{t-2}.
	\]
	Let $u_0=0$ and $u_j=\sum_{i=0}^{j-1}xg^i=x\frac{g^j-1}{g-1}$ with $1\le j\le t-1.$
	
	If $0\le i<j\le t-2$ and $u_i=u_j$, then $g^i=g^j$, contradicting the fact that $g$ has order $t-1$.
	Therefore, $u_0,u_1,\ldots,u_{t-2}$ are pairwise distinct.
	Furthermore, $u_{t-1}=x\frac{g^{t-1}-1}{g-1}=0.$
	
	Since $g^0,g^1,\ldots,g^{t-2}$ run through $\mathbb Z_t^*$, the vertices $u_0,u_1,\ldots,u_{t-2}$ comprise $\mathbb Z_t\setminus\left\{-\frac{x}{g-1}\right\}.$
	Using $x=(1-g)y$, we obtain $-\frac{x}{g-1}=y.$
	Thus $y$ is the unique vertex not visited by the geometric sequence.
	
	The first difference $x$ corresponds to the arc from $0$ to $x$.
	Since $x=y+z$, this arc can be replaced by $0\longrightarrow y\longrightarrow x,$
	whose successive differences are $y$ and $z$.
	This inserts the missing vertex $y$, so the resulting sequence $\boldsymbol{\rho}_t(y,g)$ is a Hamiltonian difference sequence on $\mathbb Z_t$.

\end{proof}

Under the hypotheses and notation of Lemma~\ref{pfp}, we write $\boldsymbol{\rho}_t(y,g)=(\rho_1,\ldots,\rho_t)$ and for each $a\in\mathbb Z_t^*$, define
$\mu_a^{(t)}=\left|\left\{i\in\{1,\ldots,t\}:\rho_i=a\right\}\right|.$
The resulting multiplicities are
\[
\mu_a^{(t)}=
\begin{cases}
	0, & a=x,\\
	2, & a\in\{y,z\},\\
	1, & a\in\mathbb Z_t^*\setminus\{x,y,z\}.
\end{cases}
\]
In particular, $\sum_{a\in\mathbb Z_t^*}\mu_a^{(t)}=t$ and $\sum_{a\in\mathbb Z_t^*}\mu_a^{(t)}a=0$ in $\mathbb Z_t.$

For fixed $g$, the map $y\mapsto(1-g)y$ is a bijection of $\mathbb Z_t^*$, so the zero-multiplicity class $x$ can be chosen arbitrarily.
In the next step, we choose $x$ so that no required nonunit difference is congruent to $x$ modulo $t$.
In the subsequent lifting, $\mu_a^{(t)}$ specifies how many differences congruent to $a$ modulo $t$ are selected, while $\rho_1,\ldots,\rho_t$ specify the order in which they occur along the directed base path.

\section{Construction of the sufficient difference sets}
\label{sec:long-difference-sets}

Throughout this section, let $7\le p<q$ be distinct odd primes and put
$n=pq$. We continue to write
$T_{p,q}=\Cay(\mathbb Z_{pq},S)$, where
$S=\{1,2,\ldots,(pq-1)/2\}$.

By the reduction in Section~\ref{ds}, it remains to construct disjoint
sufficient difference sets $D_p$ and $D_q$ of types $p$ and $q$,
respectively. We record this task in the following proposition.

\begin{proposition}
	\label{prop:long-difference-sets}
	There exist disjoint sufficient difference sets $D_p,D_q\subseteq S$, realized by directed base paths of lengths $p$ and $q$, respectively, such that
	\[
	M_q\subseteq D_p,\qquad |D_p|=p,
	\]
	and
	\[
	M_p\subseteq D_q,\qquad |D_q|=q.
	\]
\end{proposition}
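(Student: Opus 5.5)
Here is the plan. We first build $D_p$ and its base path $P_p$ from a Hamiltonian difference sequence modulo $p$. Then we build $D_q$ and $P_q$ the same way modulo $q$, choosing its differences so that they avoid $D_p$.

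\emph{Step 1 (the type-$p$ path).} The elements of $M_q=\{q,2q,\ldots,\frac{p-1}{2}q\}$ have pairwise distinct residues modulo $p$. These residues are $jq$ for $1\le j\le \frac{p-1}{2}$, which is half of $\mathbb Z_p^*$. Fix a primitive root $g$ modulo $p$. Since $y\mapsto(1-g)y$ is a bijection, we may choose $y$ so that the omitted class $x=(1-g)y$ lies outside $\{[m]_p:m\in M_q\}$. Take $\boldsymbol\rho_p(y,g)=(\rho_1,\ldots,\rho_p)$ from Lemma~\ref{pfp}.

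For each position $i$ we choose an actual difference $\delta_i\in S$ with $\delta_i\equiv\rho_i\pmod p$, subject to three requirements. First, each $m\in M_q$ is used at a position carrying its residue; this is possible because every such residue has multiplicity at least $1$. Second, the $\delta_i$ are pairwise distinct. Third, the remaining $\delta_i$ are units, and their residues modulo $q$ are pairwise distinct and also distinct from $0$.

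We get the third requirement from Hall's theorem applied to a bipartite graph. One side is the set of free positions, at most $\frac{p+3}{2}$ of them. The other side is $\mathbb Z_q^*$. A position $i$ is joined to a residue $b\in\mathbb Z_q^*$ when some unit $\delta\in S$ satisfies $\delta\equiv\rho_i\pmod p$ and $\delta\equiv b\pmod q$. By the Chinese remainder theorem, each pair $(a,b)$ determines exactly one element of $\mathbb Z_{pq}$, and it lies in $S$ for exactly one of the pairs $(a,b)$ and $(-a,-b)$. Hence each position has about $(q-1)/2$ admissible residues, which exceeds the number of positions since $q>p$, so Hall's condition holds.

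The vertices $v_i=\delta_1+\cdots+\delta_i$ then reduce modulo $p$ to the partial sums $s_i$. This gives conditions (1) and (2) of the definition of a directed base path. For condition (3) we need $v_p\not\equiv 0\pmod q$. If it fails, we exchange one matching edge: we replace some unit $\delta_i$ by another lift with the same residue modulo $p$ and a different unused residue modulo $q$. This shifts $v_p$ modulo $q$ by a nonzero amount. Because $p\ge 7$, there are enough spare residues to make such an exchange.

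\emph{Step 2 (the type-$q$ path).} We now work with the Hamiltonian difference sequence $\boldsymbol\rho_q(y',g')$. The elements of $M_p$ have distinct residues $jp$ modulo $q$. We choose $x'$ outside these residues, place each element of $M_p$ at a position with its residue, and fill the remaining roughly $\frac{q+3}{2}$ positions class by class.

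For a position with residue $a\in\mathbb Z_q^*$, the candidate lifts in $S$ are the units $\delta\equiv a\pmod q$ with $\delta\in S$. There are about $(p-1)/2$ of them, one for each of the roughly half of the classes $b\in\mathbb Z_p^*$ that land in $S$. We must avoid the elements of $D_p$, and here Step 1 pays off. Every element of $D_p$ other than $M_q$ has a distinct residue modulo $q$, so each class $a$ contains at most one unit of $D_p$, or at most two counting the multiples of $q$ only through their own class. Since $\mu_a\le 2$, we need at most $2$ lifts per class. Thus roughly $(p-1)/2-1\ge 2$ candidates remain when $p\ge7$, and the class-by-class choice succeeds.

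Finally, if the sum of $D_q$ is divisible by $p$, we replace one chosen lift by another lift in the same class modulo $q$. This changes the residue modulo $p$ and so the sum modulo $p$. Such a replacement needs a third available candidate in that class, which again is where $p\ge7$ enters.

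\emph{Main obstacle.} The delicate points are the endpoint conditions, namely the exchange in Step 1 and the final replacement in Step 2. The difficulty is the careful counting of how many lifts of a class modulo $q$ actually lie in $S$ and are not already used. For this count I would first write out the exact numbers $|\{b:\text{CRT}(a,b)\in S\}|$, which are $\frac{p\pm1}{2}$. I would then check the worst case, a class with $\mu_a=2$ that also meets $D_p$, where the room is smallest. This is exactly where $p\ge 7$ should become necessary.
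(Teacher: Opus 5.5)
Your plan is the paper's own. The omitted class is placed outside the residues of the forced nonunits, a Hall matching lifts the free positions of $\boldsymbol{\rho}^{(p)}$ to units with distinct nonzero residues modulo $q$, and one matching edge is exchanged to get $q\nmid v_p$. Then $D_q$ is chosen class by class modulo $q$, with one final replacement. The gap is that the counts which make these steps work are deferred, and the counts you do state are wrong.

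\textbf{Step 1.} There are exactly $u=(p+1)/2$ free positions, not up to $(p+3)/2$. A position of class $a$ has $|\Lambda_a^{(p)}|$ admissible residues, or $|\Lambda_a^{(p)}|-1$ if $a\in A_p$. This drops to $(q-3)/2$ when $a\in A_p$, $a\ge(p+1)/2$ and $\mu_a=2$. For twin primes $q=p+2$ that equals $u-1$, so your claim that every position has more admissible residues than there are positions is false. Hall's condition for $X=L$ then needs a position from a class outside $A_p\cup\{x_p\}$, which has at least $(q-1)/2\ge u$ admissible residues.

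The exchange is subtler. The matching already occupies $u$ residues, so to be sure of an unused one you need a position with at least $u+1$ admissible residues.
\begin{itemize}
\item For $q\ge p+6$, every position qualifies.
\item For $q=p+4$, any class outside $A_p\cup\{x_p\}$ qualifies.
\item For $q=p+2$, we have $A_p=\{2,4,\dots,p-1\}$, so only odd classes $a\le(p-1)/2$ with $a\ne x_p$ qualify. The paper takes $a\in\{1,3\}\setminus\{x_p\}$, and this is where $p\ge7$ is genuinely used in Step 1. For $(p,q)=(5,7)$ with $x_p=1$, no such position exists.
\end{itemize}

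\textbf{Step 2.} The class-by-class choice is fine. For $b\notin A_q$ you have $|V_b^{(q)}|\ge(p-3)/2\ge2\ge\mu_b$. For $b\in A_q$ you have $|V_b^{(q)}|\ge(p-5)/2\ge1\ge\mu_b-1$. Multiples of $q$ lie in class $0$ modulo $q$, so they play no role in these classes.

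Your final replacement, however, asks for a third candidate, and that is not guaranteed. For $p=7$, a class $b\notin A_q$ with $b>(q-1)/2$ has only $3$ lifts in $S$, and one of them may already lie in $D_p$. If also $\mu_b=2$, nothing is left to swap in.

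The fix is to choose $b_0\notin A_q$ with $\mu_{b_0}=1$. Such a class exists because at most three classes have multiplicity different from $1$, while $|\mathbb Z_q^*\setminus A_q|=(q-1)/2\ge5$. Then you need only a second element $d'\in V_{b_0}^{(q)}$, which $|V_{b_0}^{(q)}|\ge(p-3)/2\ge2$ supplies. Swapping $d'$ for the chosen $d$ changes the sum by $d'-d\not\equiv0\pmod p$. This holds because distinct units of $S$ in one residue class modulo $q$ are distinct modulo $p$.
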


We prove the proposition in several steps.
We first choose the difference sequences modulo $p$ and $q$, then construct $D_p$ using a bipartite matching, and finally construct $D_q$ while avoiding the differences already used in $D_p$.

\subsection{The difference sequences modulo \texorpdfstring{$p$ and $q$}{p and q}}

The difference sequence modulo $p$ determines how many elements of $D_p$ are chosen from each residue class.
Since $M_q\subseteq D_p$, the residue modulo $p$ of each difference in $M_q$ must occur in the difference sequence used for $D_p$.
Similarly, the residue modulo $q$ of each difference in $M_p$ must occur in the difference sequence used for $D_q$.

Define
\[
A_p=\left\{[kq]_p:1\le k\le\frac{p-1}{2}\right\}\subseteq\mathbb Z_p^*
\]
and
\[
A_q=\left\{[kp]_q:1\le k\le\frac{q-1}{2}\right\}\subseteq\mathbb Z_q^*.
\]
Thus $A_p$ and $A_q$ are precisely the residues of the differences in $M_q$ and $M_p$, respectively.

\begin{lemma}\label{cs}
	There exist Hamiltonian difference sequences
	\[
	\boldsymbol{\rho}^{(p)}
	=
	\bigl(\rho_1^{(p)},\ldots,\rho_p^{(p)}\bigr)
	\]
	and
	\[
	\boldsymbol{\rho}^{(q)}
	=
	\bigl(\rho_1^{(q)},\ldots,\rho_q^{(q)}\bigr)
	\]
	whose unique zero-multiplicity classes $x_p$ and $x_q$ satisfy
	$x_p\notin A_p$ and $x_q\notin A_q.$
	In particular, $\mu_a^{(p)}\ge1$ for $a\in A_p$ and $\mu_b^{(q)}\ge1$ for $b\in A_q.$
\end{lemma}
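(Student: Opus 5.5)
We apply Lemma~\ref{pfp} twice, once with $t=p$ and once with $t=q$, choosing the parameter $y$ in each case so that the omitted class $x=(1-g)y$ falls outside the residue set of the required nonunit differences. Both $p\ge7$ and $q>p\ge7$ are primes at least $5$, so Lemma~\ref{pfp} applies to each.

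The first step is a counting bound. Consider $A_p=\{[kq]_p:1\le k\le (p-1)/2\}$. Since $q$ is a unit modulo $p$, the map $k\mapsto [kq]_p$ is injective on $\{1,\dots,p-1\}$, so $|A_p|=(p-1)/2$. (In fact $A_p$ is exactly one representative from each pair $\{a,-a\}$, though only the size matters here.) Hence
\[
|\mathbb Z_p^*\setminus A_p| = (p-1)-\frac{p-1}{2}=\frac{p-1}{2}\ge 3>0.
\]
For $A_q=\{[kp]_q:1\le k\le (q-1)/2\}$ the same argument with $p$ a unit modulo $q$ gives $|A_q|=(q-1)/2$, so $\mathbb Z_q^*\setminus A_q$ is also nonempty.

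Next we make the choice. Fix a primitive root $g_p$ modulo $p$ and pick any $x_p\in\mathbb Z_p^*\setminus A_p$. Since $g_p\ne 1$, the element $1-g_p$ is invertible, and we set $y_p=(1-g_p)^{-1}x_p\in\mathbb Z_p^*$. By Lemma~\ref{pfp}, $\boldsymbol{\rho}^{(p)}=\boldsymbol{\rho}_p(y_p,g_p)$ is a Hamiltonian difference sequence whose unique zero-multiplicity class is $(1-g_p)y_p=x_p\notin A_p$. We do the same modulo $q$ with a primitive root $g_q$ and some $x_q\in\mathbb Z_q^*\setminus A_q$.

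The "in particular" clause then follows from the multiplicity table after Lemma~\ref{pfp}. Every $a\in\mathbb Z_t^*$ other than $x$ has $\mu_a^{(t)}\in\{1,2\}$. Since $A_p\subseteq\mathbb Z_p^*\setminus\{x_p\}$, every $a\in A_p$ has $\mu_a^{(p)}\ge1$, and likewise $\mu_b^{(q)}\ge1$ for every $b\in A_q$.

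There is no real obstacle here. The only points to check are that $A_p$ and $A_q$ miss at least one nonzero class and that $y\mapsto(1-g)y$ is a bijection of $\mathbb Z_t^*$, which the paper already notes.
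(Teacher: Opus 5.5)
Your proposal is correct and follows the paper's proof essentially step for step. You count $|A_p|=(p-1)/2$ and $|A_q|=(q-1)/2$ using multiplication by a unit, pick $x_p\notin A_p$ and $x_q\notin A_q$, set $y=(1-g)^{-1}x$, invoke Lemma~\ref{pfp}, and read the ``in particular'' clause off the multiplicity table.
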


\begin{proof}
	Since $p\nmid q$, multiplication by $q$ permutes $\mathbb Z_p^*$. Therefore,
	\[
	|A_p|=\frac{p-1}{2}<|\mathbb Z_p^*|,
	\]
	and we may choose $x_p\in\mathbb Z_p^*\setminus A_p.$
	Fix a primitive root $g_p$ modulo $p$ and define $y_p=(1-g_p)^{-1}x_p$, $z_p=-g_py_p.$
	Then $x_p=(1-g_p)y_p=y_p+z_p.$
	Lemma~\ref{pfp} gives a Hamiltonian difference sequence $\boldsymbol{\rho}^{(p)}$ whose unique zero-multiplicity class is $x_p$.
	Since $x_p\notin A_p$, we have $\mu_a^{(p)}\ge1$ for $a\in A_p.$
	
    Similarly, applying the same argument modulo $q$, we obtain a Hamiltonian difference sequence $\boldsymbol{\rho}^{(q)}$ whose unique zero-multiplicity class $x_q$ satisfies
    $x_q\notin A_q.$
    Consequently, $\mu_b^{(q)}\ge 1$ for $b\in A_q.$

\end{proof}

\subsection{Residue-class estimates modulo \texorpdfstring{$p$}{p}}

Put $m=\frac{pq-1}{2},$ so that $S=\{1,2,\ldots,m\}$.
For each $a\in\mathbb Z_p^*$, define
\[
\Lambda_a^{(p)}=\{d\in S:[d]_p=a\}
\]
and
\[
U_a^{(p)}=\Lambda_a^{(p)}\cap\mathbb Z_{pq}^{\times}.
\]
Thus $\Lambda_a^{(p)}$ consists of the differences in $S$ that are congruent to $a$ modulo $p$, while $U_a^{(p)}$ consists of the unit differences among them.

\begin{lemma}
	\label{pre}
	For every $a\in\mathbb Z_p^*$, the following statements hold.
	
	\begin{enumerate}
		\item[(i)]
		\[
		\left|\Lambda_a^{(p)}\right|
		=
		\begin{cases}
			\dfrac{q+1}{2},
			&1\le a\le\dfrac{p-1}{2},\\[6pt]
			\dfrac{q-1}{2},
			&\dfrac{p+1}{2}\le a\le p-1.
		\end{cases}
		\]
		
		\item[(ii)] The set $\Lambda_a^{(p)}$ contains exactly one element
		of $M_q$ when $a\in A_p$, and no element of $M_q$ when
		$a\notin A_p$. Consequently,
		\[
		\left|U_a^{(p)}\right|
		=
		\begin{cases}
			\left|\Lambda_a^{(p)}\right|-1,
			& a\in A_p,\\[4pt]
			\left|\Lambda_a^{(p)}\right|,
			& a\notin A_p,
		\end{cases}
		\]
		and hence
		\[
		\left|U_a^{(p)}\right|
		\ge
		\frac{q-3}{2}.
		\]
		
		\item[(iii)] Distinct elements of $U_a^{(p)}$ have distinct
		residues modulo $q$.
	\end{enumerate}
\end{lemma}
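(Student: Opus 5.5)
For (i), we count directly. Write $m=\frac{pq-1}{2}=p\cdot\frac{q-1}{2}+\frac{p-1}{2}$. The integers in $\{1,\dots,m\}$ split into $\frac{q-1}{2}$ complete blocks $\{jp+1,\dots,jp+p\}$ for $0\le j<\frac{q-1}{2}$, together with a tail $\{p\frac{q-1}{2}+1,\dots,p\frac{q-1}{2}+\frac{p-1}{2}\}$. Each complete block contains exactly one integer from every residue class modulo $p$. The tail contains exactly the residues $1,\dots,\frac{p-1}{2}$, each once. Hence $|\Lambda_a^{(p)}|=\frac{q-1}{2}+1=\frac{q+1}{2}$ when $1\le a\le\frac{p-1}{2}$, and $|\Lambda_a^{(p)}|=\frac{q-1}{2}$ otherwise.

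For (ii), the non-units in $\Lambda_a^{(p)}$ with $a\ne0$ cannot be multiples of $p$, so they are exactly the elements of $M_q$ congruent to $a$ modulo $p$. The elements $kq$ of $M_q$ with $1\le k\le\frac{p-1}{2}$ have pairwise distinct residues modulo $p$. Indeed, $kq\equiv k'q$ forces $k\equiv k'\pmod p$, and since $|k-k'|<p$ this gives $k=k'$. So each $a\in A_p$ is hit by exactly one element of $M_q$, and each $a\notin A_p$ by none. This yields the displayed formula for $|U_a^{(p)}|$. The lower bound follows from the worst case of (i): $|U_a^{(p)}|\ge\frac{q-1}{2}-1=\frac{q-3}{2}$.

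For (iii), suppose $d,d'\in U_a^{(p)}$ are congruent modulo $q$. They are also congruent modulo $p$ (both equal $a$), so by the Chinese remainder theorem $d\equiv d'\pmod{pq}$. Since $1\le d,d'\le m<pq$, we get $d=d'$. In fact this argument works for all of $\Lambda_a^{(p)}$.

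None of these steps is a real obstacle. The only care needed is the exact block decomposition of $m$ in (i), and using the bound $k<p$ in (ii) to conclude that the residues of $M_q$ are distinct.
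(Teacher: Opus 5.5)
Your proof is correct and follows the paper. Part (iii) is exactly the paper's argument: both $p$ and $q$ divide $d-d'$, and $|d-d'|<pq$. Parts (i) and (ii), which the paper dismisses as straightforward, are filled in correctly by your block decomposition $m=p\cdot\frac{q-1}{2}+\frac{p-1}{2}$ and by the injectivity of $k\mapsto[kq]_p$ on $1\le k\le\frac{p-1}{2}$.
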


\begin{proof}
    The proofs of part~(i) and~(ii) are straightforward.
	For part~(iii), suppose that $d,d'\in U_a^{(p)}$ have the same residue modulo $q$.
	They also have the same residue modulo $p$, so both $p$ and $q$ divide $d-d'$. Hence
	$pq\mid d-d'.$
	Since $d,d'\in S$ and $m<pq$, we have $|d-d'|<pq$.
	Therefore, $d=d'$.

\end{proof}

\subsection{A matching construction for \texorpdfstring{$D_p$}{Dp}}
\label{subsec:matching-Dp}

The set $M_q$ already provides one difference in each residue class belonging to $A_p$.
For each $a\in\mathbb Z_p^*$, define
\[
\lambda_a=
\begin{cases}
	\mu_a^{(p)}-1,
	&a\in A_p,\\[4pt]
	\mu_a^{(p)},
	&a\notin A_p.
\end{cases}
\]
Thus $\lambda_a$ is the number of additional unit differences that must be chosen from the residue class $a$ modulo $p$.
By Lemma~\ref{cs}, $\lambda_a\ge0$ for every $a\in\mathbb Z_p^*$.
Moreover,
\[
\sum_{a\in\mathbb Z_p^*}\lambda_a
=
\sum_{a\in\mathbb Z_p^*}\mu_a^{(p)}-|A_p|
=
\frac{p+1}{2}.
\]
Let $u=\frac{p+1}{2}.$ The remaining task is to choose $\lambda_a$ unit differences from each residue class $a$ modulo $p$ so that their residues modulo $q$ are pairwise distinct.

Let $G_p=(L,R,E)$ be the bipartite graph with bipartite sets $L$ and $R$, where
\[
L=\left\{(a,i):a\in\mathbb Z_p^*,\ 1\le i\le\lambda_a\right\}
\quad \text{and} \quad
R=\mathbb Z_q^*.
\]
The vertex $(a,i)$ represents one of the $\lambda_a$ unit differences required from the residue class $a$ modulo $p$.
In particular, $|L|=\sum_{a\in\mathbb Z_p^*}\lambda_a=u.$

The edge set $E$ is defined by
\[E=\left\{\bigl((a,i),b\bigr)\in L\times R:[d]_q=b\text{ for some }d\in U_a^{(p)}\right\}.\]
Thus $(a,i)$ is adjacent to $b$ precisely when there is an available unit difference whose residue is $a$ modulo $p$ and $b$ modulo $q$.

By Lemma~\ref{pre}(iii), the difference associated with an edge is unique.
For each $e=\bigl((a,i),b\bigr)\in E$, denote this difference by $d(e)$.
Hence
\[
[d(e)]_p=a
\qquad\text{and}\qquad
[d(e)]_q=b.
\]

For $X\subseteq L$, let $N_{G_p}(X)$ denote the neighborhood of $X$ in $G_p$, the set of all vertices in $R$ that are adjacent to at least one vertex of $X$.
For a single vertex $v$, we write $N_{G_p}(v)$ instead of $N_{G_p}(\{v\})$.
 If $v=(a,i)$, then
\[
N_{G_p}(v)=\left\{[d]_q:d\in U_a^{(p)}\right\}.
\]
Lemma~\ref{pre}(iii) therefore gives
\begin{equation}
	\label{eq:degree}
	\deg_{G_p}(v)=\left|U_a^{(p)}\right|\ge\frac{q-3}{2}.
\end{equation}

An $L$-saturating matching is a set of disjoint edges covering every vertex in $L$.
 It therefore selects the required number of unit differences from every residue class modulo $p$, with pairwise distinct residues modulo $q$.

A condition clearly necessary for the existence of an $L$-saturating
matching is that every subset of $L$ has enough neighbors in $R$, i.e. that
 \[
 \left|N_{G_p}(X)\right|\ge\left|X\right|\quad \text{for all}\ X\subseteq L.
 \]
This is Hall's condition. In 1935, Philip Hall formulated this condition for systems of distinct representatives and proved that it is also sufficient \citep{hallRepresentativesSubsets1935}.
In its equivalent graph-theoretic form, Hall's marriage theorem states that a finite bipartite graph has an $L$-saturating matching if and only if Hall's condition holds.
The following lemma verifies this condition for $G_p$ and then uses the resulting matching to select the required unit differences.

\begin{lemma}\label{mD}
	There exists a matching $\mathcal M$ in $G_p$ that saturates $L$ and satisfies
\begin{equation}
	\label{eq:matching-sum}
	\sum_{d\in M_q}d
	+
	\sum_{e\in\mathcal M}d(e)
	\not\equiv0\pmod q.
\end{equation}
\end{lemma}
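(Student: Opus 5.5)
First verify Hall's condition for $G_p$ and invoke Hall's theorem to obtain some $L$-saturating matching $\mathcal M_0$. By \eqref{eq:degree}, every vertex of $L$ has degree at least $(q-3)/2$, and $|L|=u=(p+1)/2$. Since $p<q$ are odd primes with $p\ge7$, we have $q\ge p+2$ (indeed $q\ge 11$), so $(q-3)/2\ge (p-1)/2$. This degree bound alone only nearly gives Hall; we need $(q-3)/2\ge (p+1)/2$, i.e. $q\ge p+4$. For the twin-prime case $q=p+2$, refine the bound: a vertex $(a,i)$ has $\lambda_a\ge1$, and if $a\in A_p$ then $\mu_a^{(p)}=2$. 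Hence at most two residue classes ($y_p,z_p$) can carry two vertices of $L$. For any nonempty $X\subseteq L$, $|N(X)|\ge\min_{v\in X}\deg(v)$. If $|X|\le (q-3)/2$, Hall holds trivially. Otherwise $X$ contains vertices from at least two distinct classes $a\ne a'$. The neighbourhoods $\{[d]_q:d\in U_a^{(p)}\}$ and $\{[d]_q:d\in U_{a'}^{(p)}\}$ are then distinct sets in $\mathbb Z_q^*$. Arguing via $|\Lambda_a^{(p)}|$ from Lemma~\ref{pre}(i) (some classes have $(q+1)/2$ elements), I expect the union to have size at least $(q-1)/2\ge u$. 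This gives Hall's condition, so $\mathcal M_0$ exists.

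Next, enforce \eqref{eq:matching-sum} by an exchange. Let $\Sigma=\sum_{d\in M_q}d+\sum_{e\in\mathcal M_0}d(e)$. If $\Sigma\not\equiv0\pmod q$, we are done. Otherwise, pick any $v=(a,i)\in L$ with matched edge $e_0=(v,b_0)$. We replace $e_0$ by an edge $(v,b)$ with $b\in\mathbb Z_q^*$ not matched by $\mathcal M_0\setminus\{e_0\}$, and with $b\ne b_0$. The new sum modulo $q$ changes by $b-b_0\ne0$, because only the $q$-residue of $d(e)$ matters mod $q$ and $[d(e)]_q=b$. Therefore any such replacement yields a nonzero sum. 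It remains to check that such a $b$ exists: the number of available neighbours of $v$ is at least $\deg(v)-(u-1)-1$. Preserving residue $a$ mod $p$ keeps the $\lambda_a$ counts intact, so the modified matching still saturates $L$ with the correct class counts.

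The main obstacle is this counting. We need $\deg(v)\ge u+1=(p+3)/2$. With $\deg(v)\ge (q-3)/2$, this holds when $q\ge p+6$. In the tight cases $q\in\{p+2,p+4\}$, I would choose $v$ in a class $a\le (p-1)/2$ with $a\notin A_p$, where $|U_a^{(p)}|=(q+1)/2$. Such a class with $\lambda_a\ge1$ should exist, since $A_p$ has only $(p-1)/2$ elements and the classes with $\lambda_a\ge1$ number at least $u-1$. Failing that, I would use the doubled classes $y_p,z_p$, whose two vertices give extra flexibility: swap which $b$ each uses. This is where $p\ge7$ would be used, to guarantee enough slack in these residue-class estimates.
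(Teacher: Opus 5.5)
\textbf{Verdict: genuine gap in the tight cases $q\in\{p+2,p+4\}$.} Your overall plan is the paper's: verify Hall's condition, then swap the matching edge at a vertex $v$ with $\deg_{G_p}(v)\ge u+1$ to an unused neighbour $b'$. That swap changes the sum modulo $q$ by $b'-b\not\equiv0$. The cases $q\ge p+6$, and Hall's condition for $q\ge p+4$, are handled correctly. The tight cases are exactly where the lemma needs work, and there the argument is incomplete.

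\textbf{Hall's condition for $q=p+2$, $X=L$.} You assert without proof that two distinct classes have distinct neighbourhoods, and then only ``expect'' $|N_{G_p}(L)|\ge(q-1)/2$. Distinctness would suffice, since two distinct sets each of size at least $(q-3)/2$ have union of size at least $(q-1)/2$, but you give no argument for it. It is also unnecessary. Take any $a\in\mathbb Z_p^*\setminus(A_p\cup\{x_p\})$, which exists because $|\mathbb Z_p^*\setminus A_p|=(p-1)/2\ge3$. Then $\lambda_a=\mu_a^{(p)}\ge1$ and $\deg_{G_p}((a,1))=|\Lambda_a^{(p)}|\ge(q-1)/2=u$. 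The same choice of $a$ gives degree at least $(q-1)/2=u+1$ when $q=p+4$, so in that case you do not need $a\le(p-1)/2$.

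\textbf{The exchange step for $q=p+2$.} Here you genuinely need $a\le(p-1)/2$ with $a\notin A_p$ and $a\ne x_p$, to reach degree $(q+1)/2=u+1$. Your justification (``$A_p$ has only $(p-1)/2$ elements and at least $u-1$ classes have $\lambda_a\ge1$'') does not prove this, because the count says nothing about where $A_p$ sits. For instance, if $q\equiv1\pmod p$ one would have $A_p=\{1,\ldots,(p-1)/2\}$ and no such $a$ would exist.

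The missing step is to compute $A_p$ from $q\equiv2\pmod p$, which gives $A_p=\{2,4,\ldots,p-1\}$. Hence $1,3\notin A_p$, both are at most $(p-1)/2$ because $p\ge7$, and at least one of them differs from $x_p$. This is precisely where $p\ge7$ enters.

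\textbf{Your fallback fails.} Swapping the two $b$'s assigned to a doubled class $y_p$ or $z_p$ cannot help. Both vertices lie in the same class $a$, so the swap leaves the selected set of differences unchanged, and therefore leaves the sum unchanged.
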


\begin{proof}
	We first verify Hall's condition
	\begin{equation}
		\label{eq:hall}
	 \left|N_{G_p}(X)\right|\ge\left|X\right|\quad \text{for all}\ X\subseteq L.
	\end{equation}
	The condition is immediate when $X=\emptyset$.
	Let $\emptyset\ne X\subseteq L$ and choose $v\in X$. By \eqref{eq:degree},
\begin{equation}
	\label{eq:degree-bound}
	|N_{G_p}(X)|\ge|N_{G_p}(v)|=\deg_{G_p}(v)\ge\frac{q-3}{2}.
\end{equation}
	
	Suppose first that $q\ge p+4$. Then
	\[
	\frac{q-3}{2}\ge\frac{p+1}{2}=u\ge|X|.
	\]
	Thus \eqref{eq:hall} holds.
	
	It remains to consider $q=p+2$.
	In this case, $\frac{q-3}{2}=u-1.$
	It follows from \eqref{eq:degree-bound} that Hall's condition holds whenever
	$X\subsetneq L$, since then $|X|\le u-1$.
	
	We only need to consider $X=L$.
	Since $q\equiv2\pmod p$,
	\[
	A_p=\{2,4,\ldots,p-1\}.
	\]
	As $p\ge7$, both $1$ and $3$ lie in $\mathbb Z_p^*\setminus A_p$.
	Choose $a\in\{1,3\}\setminus\{x_p\}.$
	Then $a\notin A_p$ and $a\ne x_p$, so
	\[
	\lambda_a=\mu_a^{(p)}\ge1.
	\]
	Consequently, $L$ contains a vertex $v=(a,1)$.
	Since $a\notin A_p$ and $1\le a\le(p-1)/2$, Lemma~\ref{pre} gives
	\[
	\deg_{G_p}(v)=\left|\Lambda_a^{(p)}\right|=\frac{q+1}{2}=u+1.
	\]
	Therefore,
	\[
	|N_{G_p}(L)|\ge\deg_{G_p}(v)\ge u=|L|.
	\]
	Hall's condition now holds for every $X\subseteq L$.
	By Hall's theorem, $G_p$ has a matching $\mathcal M$ that saturates $L$.
	
	To ensure that the difference set obtained from the matching satisfies the endpoint condition in Lemma~\ref{es}, its sum must be nonzero modulo $q$.
	We now adjust the matching, if necessary, to obtain this additional property.
	We first show that $L$ contains a vertex of degree at least $u+1$.
	
	If $q\ge p+6$, then every vertex of $L$ has degree at least $\frac{q-3}{2}\ge u+1.$
	
	If $q=p+4$, choose $a\in\mathbb Z_p^*\setminus\bigl(A_p\cup\{x_p\}\bigr).$
	Such an $a$ exists because $\left|\mathbb Z_p^*\setminus A_p\right|=\frac{p-1}{2}\ge3.$
	Since $a\ne x_p$, we have $\lambda_a=\mu_a^{(p)}\ge1$, and hence
	$v=(a,1)\in L$.
	Moreover,
	\[\deg_{G_p}(v)=\left|\Lambda_a^{(p)}\right|\ge\frac{q-1}{2}=u+1.\]

	Finally, if $q=p+2$, the vertex chosen above already satisfies $\deg_{G_p}(v)=u+1$.
	
	Thus, in every case, there is a vertex $v\in L$ such that
	\begin{equation}
		\label{eq:large-degree}
		\deg_{G_p}(v)\ge u+1.
	\end{equation}
	The matching $\mathcal M$ has $u$ edges and therefore uses exactly
	$u$ vertices of $R$. By \eqref{eq:large-degree}, the vertex $v$ has a neighbor $b'$
	that is not used by $\mathcal M$.
	
	Let $e$ be the edge of $\mathcal M$ incident with $v$, and let $b$ be
	its endpoint in $R$. Let $e'$ be the edge joining $v$ to $b'$. Then
	\[
	\mathcal M'
	=
	\bigl(\mathcal M\setminus\{e\}\bigr)\cup\{e'\}
	\]
	is also a $L$-saturating matching.
	
	If $\mathcal M$ already satisfies \eqref{eq:matching-sum}, there is nothing to prove.
	Otherwise, using $[d(e)]_q=b$ and $[d(e')]_q=b'$, we obtain
	\[
	\begin{aligned}
		\sum_{d\in M_q}d+\sum_{f\in\mathcal M'}d(f)
		&\equiv d(e')-d(e)\\
		&\equiv b'-b\\
		&\not\equiv0\pmod q,
	\end{aligned}
	\]
	because $b'\ne b$. Thus $\mathcal M'$ satisfies \eqref{eq:matching-sum}.

\end{proof}

\begin{lemma}
	\label{lem:construct-Dp}
	There exists a sufficient difference set $D_p\subseteq S$, realized by a directed base path of length $p$, such that $M_q\subseteq D_p.$
	Moreover, the unit differences in $D_p$ have pairwise distinct residues modulo $q$.
\end{lemma}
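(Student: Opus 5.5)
Take $D_p=M_q\cup\{d(e):e\in\mathcal M\}$, with $\mathcal M$ the $L$-saturating matching of Lemma~\ref{mD}, and realize it as a directed base path of length $p$ by ordering its elements according to the Hamiltonian difference sequence $\boldsymbol{\rho}^{(p)}$ of Lemma~\ref{cs}.

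\textbf{Step 1: residue counts.} For each $a\in\mathbb Z_p^*$, the set $D_p$ contains exactly $\mu_a^{(p)}$ elements congruent to $a$ modulo $p$. For $a\in A_p$ these are the unique element of $M_q\cap\Lambda_a^{(p)}$ (Lemma~\ref{pre}(ii)) together with the $\lambda_a=\mu_a^{(p)}-1$ matched units. For $a\notin A_p$ they are the $\lambda_a=\mu_a^{(p)}$ matched units. The matched differences are distinct, since distinct matching edges have distinct $R$-endpoints and hence distinct residues modulo $q$. They are also disjoint from $M_q$, since they are units. Therefore $D_p$ is a genuine set of size $|M_q|+u=\frac{p-1}{2}+\frac{p+1}{2}=p$, and it lies in $S$. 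Its unit elements are exactly the matched ones, so their residues modulo $q$ are pairwise distinct, which is the ``moreover'' claim.

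\textbf{Step 2: ordering.} Because the residue class counts agree with the multiplicities, there is a bijection $i\mapsto\delta_i$ from the positions $\{1,\dots,p\}$ onto $D_p$ with $[\delta_i]_p=\rho^{(p)}_i$. Set $v_0=0$ and $v_i=\delta_1+\cdots+\delta_i$ in $\mathbb Z_{pq}$. Then $[v_i]_p=s_i$, the partial sums of $\boldsymbol{\rho}^{(p)}$. This gives:
\begin{itemize}
\item Condition~(1): $\{[v_1]_p,\dots,[v_{p-1}]_p\}=\mathbb Z_p^*$, because $s_0,\dots,s_{p-1}$ are distinct with $s_0=0$.
\item Distinct vertices: $v_0,\dots,v_{p-1}$ are distinct since their residues modulo $p$ are. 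Also $v_p\ne v_i$ for $1\le i\le p-1$, since $[v_p]_p=0\ne[v_i]_p$.
\item Condition~(2): $\Delta(P)=D_p\subseteq S$ is simple.
\end{itemize}

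\textbf{Step 3: endpoint condition (the main obstacle).} Condition~(3) requires $\gcd(v_p,pq)=p$. We have $v_p\equiv s_p=0\pmod p$. Also $v_p=\sum_{d\in M_q}d+\sum_{e\in\mathcal M}d(e)\not\equiv0\pmod q$ by \eqref{eq:matching-sum}. Together these give $\gcd(v_p,pq)=p$, and in particular $v_p\ne0=v_0$, so all of $v_0,\dots,v_p$ are distinct and $P$ is a directed path.

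This endpoint condition is the only place where the exchange built into Lemma~\ref{mD} is needed; everything else is bookkeeping. The one other point to check with care is that the distinctness of the vertices follows from the modulo-$p$ structure alone.
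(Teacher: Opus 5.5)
Your proposal is correct and follows the paper's proof essentially step for step. You take the same set $D_p=M_q\cup\{d(e):e\in\mathcal M\}$, order it by $\boldsymbol{\rho}^{(p)}$, and use \eqref{eq:matching-sum} to get $\gcd(v_p,pq)=p$. One small correction to your closing remark: $v_0$ and $v_p$ are not distinguished by the modulo-$p$ structure, since both are $0$ modulo $p$; they are distinguished by the modulo-$q$ condition, exactly as you yourself use in Step~3.
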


\begin{proof}
	Let $\mathcal M$ be the matching given by
	Lemma~\ref{mD}, and define
	\begin{equation}
		\label{eq:Dp-def}
		D_p
		=
		M_q\cup
		\left\{
		d(e):e\in\mathcal M
		\right\}.
	\end{equation}
	Distinct edges of $\mathcal M$ have distinct endpoints in $R$.
	Therefore, their associated differences have pairwise distinct
	residues modulo $q$ and, in particular, are distinct. They are units,
	whereas the elements of $M_q$ are nonunits, so the union in \eqref{eq:Dp-def} is
	disjoint.
	
	Since $\mathcal M$ saturates all $\lambda_a$ vertices $(a, i)$ of $L$, it selects exactly $\lambda_a$ unit differences congruent to $a$ modulo $p$.
	The set $M_q$ contributes one additional difference when $a\in A_p$ and contributes no difference otherwise.
	Hence
	\begin{equation}
		\label{eq:Dp-residue-count}
		\left|\left\{d\in D_p:[d]_p=a\right\}\right|=\mu_a^{(p)}\qquad(a\in\mathbb Z_p^*).
	\end{equation}
	Consequently,
	\[
	|D_p|=\sum_{a\in\mathbb Z_p^*}\mu_a^{(p)}=p
	\]
	and
	\begin{equation}
		\label{eq:Dp-sum-p}
		\sum_{d\in D_p}d
		\equiv
		\sum_{a\in\mathbb Z_p^*}\mu_a^{(p)}a
		\equiv0
		\pmod p.
	\end{equation}
	Lemma~\ref{mD} also gives
	\begin{equation}
		\label{eq:Dp-sum-q}
		\sum_{d\in D_p}d\not\equiv0\pmod q.
	\end{equation}
	
	Recall that
	\[
	\boldsymbol{\rho}^{(p)}
	=
	\bigl(\rho_1^{(p)},\ldots,\rho_p^{(p)}\bigr)
	\]
	is the Hamiltonian difference sequence chosen in
	Lemma~\ref{cs}. For every $a\in\mathbb Z_p^*$, the residue $a$
	occurs exactly $\mu_a^{(p)}$ times in this sequence. By
	\eqref{eq:Dp-residue-count},
	$D_p$ contains exactly the same number of differences congruent to
	$a$ modulo $p$. Therefore, the elements of $D_p$ can be ordered as
	$d_1,\ldots,d_p$ so that
	
	\begin{equation}
		\label{eq:Dp-order}
		[d_i]_p=\rho_i^{(p)}
		\qquad
		(1\le i\le p).
	\end{equation}
	
	Define vertices in $\mathbb Z_{pq}$ by
	\[
	v_0=0
	\qquad\text{and}\qquad
	v_i=\sum_{j=1}^{i}d_j
	\quad
	(1\le i\le p).
	\]
	By \eqref{eq:Dp-order}, the residues $[v_0]_p,[v_1]_p,\ldots,[v_p]_p$ are the partial sums of
	$\boldsymbol{\rho}^{(p)}$.
	Since $\boldsymbol{\rho}^{(p)}$ is a Hamiltonian difference sequence, $[v_0]_p,[v_1]_p,\ldots,[v_{p-1}]_p$ are pairwise distinct, and $[v_p]_p=0.$
	In particular, $[v_i]_p\ne0$ for $1\le i\le p-1.$
	It follows from \eqref{eq:Dp-sum-q} that $[v_p]_q\ne0,$ so $v_p\ne v_0$.
	Moreover, $v_p\ne v_i$ for $1\le i\le p-1$, because $[v_p]_p=0$ whereas $[v_i]_p\ne0$.
	Thus $v_0,v_1,\ldots,v_p$ are pairwise distinct.
	
	Since $[v_0]_p=0$, it follows that
	\[
	\left\{
	[v_1]_p,\ldots,[v_{p-1}]_p
	\right\}
	=
	\mathbb Z_p^*.
	\]
	Since $d_1,\ldots,d_p$ are distinct elements of $S$, the sequence $P_p=(v_0,v_1,\ldots,v_p)$ is a directed path in $T_{p,q}$.
	Finally, \eqref{eq:Dp-sum-p} and \eqref{eq:Dp-sum-q} imply $p\mid v_p$ and $q\nmid v_p.$
	Therefore, $\gcd(v_p,pq)=p.$
	Hence $P_p$ is a directed base path of length $p$, and $\Delta(P_p)=D_p.$
	Consequently $D_p$ is a sufficient difference set of type $p$.

\end{proof}

The unit differences in $D_p$ have pairwise distinct residues modulo $q$.
Therefore, deleting $D_p$ removes at most one unit difference from each nonzero residue class modulo $q$, which is the property needed in the construction of $D_q$.

\subsection{Construction of \texorpdfstring{$D_q$}{Dq} disjoint from
\texorpdfstring{$D_p$}{Dp}}
\label{subsec:construct-Dq}

Fix the sufficient difference set $D_p$ given by Lemma~\ref{lem:construct-Dp}.
Its unit differences have pairwise distinct residues modulo $q$.
This allows the differences for $D_q$ to be chosen independently from the nonzero residue classes modulo $q$.

For each $b\in\mathbb Z_q^*$, define
\[
\Lambda_b^{(q)}
=
\left\{
d\in S:[d]_q=b
\right\}
\]
and
\[
V_b^{(q)}
=
\left(
\Lambda_b^{(q)}\cap\mathbb Z_{pq}^{\times}
\right)
\setminus D_p.
\]
Thus $V_b^{(q)}$ is the set of unit differences in the residue class
$b$ modulo $q$ that remain available after $D_p$ has been chosen.

The same counting argument as in Lemma~\ref{pre}, now applied modulo
$q$, gives the corresponding residue-class sizes. The only additional
point is that the differences already used in $D_p$ must be removed.
By Lemma~\ref{fp}, $D_p$ contains no element of $M_p$, while
Lemma~\ref{lem:construct-Dp} shows that its unit differences have
pairwise distinct residues modulo $q$. Thus removing $D_p$ deletes at
most one unit difference from each nonzero residue class modulo $q$.
These observations give the following analogue of Lemma~\ref{pre}; the proof details are omitted.

\begin{lemma}\label{qre}
	For every $b\in\mathbb Z_q^*$, the following statements hold.
	
	\begin{enumerate}
		\item[(i)]
		\[
			\left|\Lambda_b^{(q)}\right|
			=
			\begin{cases}
				\dfrac{p+1}{2},
				&1\le b\le\dfrac{q-1}{2},\\[6pt]
				\dfrac{p-1}{2},
				&\dfrac{q+1}{2}\le b\le q-1.
			\end{cases}
		\]
		
		\item[(ii)] The set $\Lambda_b^{(q)}$ contains exactly one element
		of $M_p$ when $b\in A_q$, and no element of $M_p$ when
		$b\notin A_q$. Moreover,
		\[
			\left|V_b^{(q)}\right|
			\ge
			\begin{cases}
				\dfrac{p-5}{2},
				&b\in A_q,\\[6pt]
				\dfrac{p-3}{2},
				&b\notin A_q.
			\end{cases}
		\]
		
		\item[(iii)] Distinct elements of $V_b^{(q)}$ have distinct
		residues modulo $p$.
	\end{enumerate}
\end{lemma}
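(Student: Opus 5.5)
Part~(i) is a count of integers in an interval. Write $m=\frac{pq-1}{2}=q\cdot\frac{p-1}{2}+\frac{q-1}{2}$. For fixed $b\in\{1,\dots,q-1\}$, the elements of $\Lambda_b^{(q)}$ are $b,b+q,b+2q,\ldots$ up to $m$, so
\[
\left|\Lambda_b^{(q)}\right|=\left\lfloor\frac{m-b}{q}\right\rfloor+1 .
\]
If $b\le\frac{q-1}{2}$, then $m-b$ lies between $q\frac{p-1}{2}$ and $q\frac{p-1}{2}+\frac{q-1}{2}-1$, so the floor equals $\frac{p-1}{2}$ and the count is $\frac{p+1}{2}$. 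If $b\ge\frac{q+1}{2}$, then $m-b$ lies strictly between $q\frac{p-3}{2}$ and $q\frac{p-1}{2}$, so the floor is $\frac{p-3}{2}$ and the count is $\frac{p-1}{2}$. This is exactly the argument of Lemma~\ref{pre}(i) with $p$ and $q$ interchanged.

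For part~(ii), I first locate the multiples of $p$ and $q$. Since $q\nmid kp$ for $1\le k\le\frac{q-1}{2}$, the residues $[kp]_q$ are nonzero. They are pairwise distinct: if $kp\equiv k'p\pmod q$, then $k\equiv k'\pmod q$, and since $|k-k'|<q$ we get $k=k'$. Hence each $b\in A_q$ is hit by exactly one element of $M_p$, and $b\notin A_q$ by none. Multiples of $q$ are congruent to $0$ modulo $q$, so $M_q$ meets no $\Lambda_b^{(q)}$ with $b\ne0$. Therefore
\[
\left|\Lambda_b^{(q)}\cap\mathbb Z_{pq}^\times\right|=\left|\Lambda_b^{(q)}\right|-[b\in A_q].
\]
Next, remove $D_p$. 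By Lemma~\ref{fp}, $D_p$ contains no element of $M_p$ (they are multiples of $p$), and its elements of $M_q$ lie in residue class $0$ modulo $q$. So within $\Lambda_b^{(q)}$, only unit elements of $D_p$ are removed. By Lemma~\ref{lem:construct-Dp}, those have pairwise distinct residues modulo $q$, so at most one of them lies in $\Lambda_b^{(q)}$. Using the minimum $\frac{p-1}{2}$ from part~(i), we get $\left|V_b^{(q)}\right|\ge\frac{p-1}{2}-1-1=\frac{p-5}{2}$ for $b\in A_q$ and $\ge\frac{p-3}{2}$ for $b\notin A_q$.

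Part~(iii) is the CRT argument of Lemma~\ref{pre}(iii). Take $d,d'\in V_b^{(q)}\subseteq S$ with equal residues modulo $p$. They also agree modulo $q$, so $pq\mid d-d'$. Since $0<d,d'\le m<pq$, this forces $d=d'$.

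There is no real obstacle. The only point needing care is in (ii): checking that $D_p$ removes at most one element from each class, which relies on the nonunits of $D_p$ lying in class $0$ modulo $q$ and its units having distinct $q$-residues.
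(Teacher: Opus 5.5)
Your proof is correct and follows the route the paper sketches while omitting the details: the interval count of Lemma~\ref{pre} with $p$ and $q$ interchanged, the observation that deleting $D_p$ removes at most one unit from each nonzero class modulo $q$ because its units have pairwise distinct $q$-residues (Lemma~\ref{lem:construct-Dp}), and the Chinese-remainder argument for (iii). You supply the omitted arithmetic, including the injectivity of $k\mapsto[kp]_q$ and the floor computations, and it checks out.
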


Since $p\ge7$, the bounds in Lemma~\ref{qre}(ii) give
\[
\left|V_b^{(q)}\right|\ge1
\qquad
(b\in A_q)
\]
and
\[
\left|V_b^{(q)}\right|\ge2
\qquad
(b\notin A_q).
\]
These bounds are sufficient to construct $D_q$.

\begin{lemma}
	\label{lem:construct-Dq}
	There exists a sufficient difference set
	\[
	D_q\subseteq S\setminus D_p
	\]
	of type $q$ such that
	\[
	M_p\subseteq D_q.
	\]
\end{lemma}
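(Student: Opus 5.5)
Mirroring Lemma~\ref{lem:construct-Dp}, now with the roles of $p$ and $q$ exchanged, I use the Hamiltonian difference sequence $\boldsymbol{\rho}^{(q)}$ from Lemma~\ref{cs}. Its multiplicities $\mu_b^{(q)}$ are $0$ at $x_q\notin A_q$, $2$ at $y_q,z_q$, and $1$ elsewhere on $\mathbb Z_q^*$. For each $b\in\mathbb Z_q^*$, set $\lambda'_b=\mu_b^{(q)}-1$ if $b\in A_q$ and $\lambda'_b=\mu_b^{(q)}$ otherwise; then $0\le\lambda'_b\le2$. Since $M_p$ already supplies one difference in each class $b\in A_q$, I will choose exactly $\lambda'_b$ further unit differences from $V_b^{(q)}$.

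Unlike the matching for $D_p$, no distinctness modulo $p$ across classes is needed. Elements in different classes modulo $q$ are automatically distinct, and within one class they are distinct by Lemma~\ref{qre}(iii). So the choice can be made class by class. The bounds after Lemma~\ref{qre} show this is possible. If $b\in A_q$, then $\lambda'_b\le1\le|V_b^{(q)}|$. If $b\notin A_q$, then $\lambda'_b\le2\le|V_b^{(q)}|$. Let $D_q$ be the union of $M_p$ with the chosen units. It avoids $D_p$ by the definition of $V_b^{(q)}$, and $M_p\cap D_p=\emptyset$ by Lemma~\ref{fp}. Each class $b$ contains exactly $\mu_b^{(q)}$ elements of $D_q$, so $|D_q|=q$ and $\sum_{d\in D_q}d\equiv\sum_b\mu_b^{(q)}b\equiv0\pmod q$.

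The main obstacle is the endpoint condition $\sum_{d\in D_q}d\not\equiv0\pmod p$. If it fails, I replace one chosen unit $d$ in some class $b$ by another $d'\in V_b^{(q)}$ not already chosen. By Lemma~\ref{qre}(iii), $d'\not\equiv d\pmod p$, so the sum changes by $d'-d\not\equiv0\pmod p$. The sum modulo $q$ and the multiplicities are unchanged. Such a spare element exists in any class $b$ with $|V_b^{(q)}|>\lambda'_b$ and $\lambda'_b\ge1$. I would take a class $b\notin A_q\cup\{x_q\}$ with $\lambda'_b=1$. Among the $(q-1)/2$ classes outside $A_q$, at most three lie in $\{x_q,y_q,z_q\}$, and $q\ge11$, so such $b$ exists. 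For it, $|V_b^{(q)}|\ge(p-3)/2\ge2$, which gives a spare. This is exactly where $p\ge7$ is used.

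Finally, I order $D_q$ as $d_1,\dots,d_q$ with $[d_i]_q=\rho_i^{(q)}$ and take partial sums $v_i$. Exactly as in Lemma~\ref{lem:construct-Dp}, the residues $[v_0]_q,\dots,[v_{q-1}]_q$ are pairwise distinct and $[v_q]_q=0$. Since $[v_q]_p\neq0$, we get $\gcd(v_q,pq)=q$ and the vertices $v_0,\dots,v_q$ are distinct. The differences are distinct elements of $S$. Hence $(v_0,\dots,v_q)$ is a directed base path of length $q$ with $\Delta=D_q$.
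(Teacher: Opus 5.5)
Your proposal is correct and follows the paper's proof essentially step for step. You select $\mu_b^{(q)}$ differences class by class (counting the $M_p$ element when $b\in A_q$) using the bounds from Lemma~\ref{qre}, swap once within a class $b_0\notin A_q$ with $\mu_{b_0}^{(q)}=1$ to force $\sum_{d\in D_q}d\not\equiv0\pmod p$, and finally order by $\boldsymbol{\rho}^{(q)}$ to build the base path; the only difference is your explicit bookkeeping via $\lambda'_b$, which the paper leaves implicit.
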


\begin{proof}
	We first construct a set $D_q^0$. For each $b\in A_q$, include the
	unique element of $M_p$ congruent to $b$ modulo $q$. Since
	\[
	1\le\mu_b^{(q)}\le2
	\qquad
	(b\in A_q),
	\]
	choose an additional $\mu_b^{(q)}-1$ elements from $V_b^{(q)}$.
	
	For each $b\notin A_q$, choose $\mu_b^{(q)}$ elements from
	$V_b^{(q)}$.
	This is possible because $0\le\mu_b^{(q)}\le2$ and $\left|V_b^{(q)}\right|\ge2.$
	The choices made for distinct values of $b$ lie in distinct residue
	classes modulo $q$. They therefore produce a set
	\[
	M_p\subseteq D_q^0\subseteq S\setminus D_p
	\]
	satisfying
	\begin{equation}
		\label{eq:Dq-count}
		\left|
		\left\{
		d\in D_q^0:[d]_q=b
		\right\}
		\right|
		=
		\mu_b^{(q)}
		\qquad
		(b\in\mathbb Z_q^*).
\end{equation}
	Consequently,
	\begin{equation}
		\label{eq:Dq-sum-q}
		\begin{aligned}
			|D_q^0|
			&=
			\sum_{b\in\mathbb Z_q^*}\mu_b^{(q)}
			=
			q,\\
			\sum_{d\in D_q^0}d
			&\equiv
			\sum_{b\in\mathbb Z_q^*}\mu_b^{(q)}b
			\equiv0\pmod q.
		\end{aligned}
	\end{equation}
	
	It remains to ensure that the sum is nonzero modulo $p$. Since $\left|\mathbb Z_q^*\setminus A_q\right|=\frac{q-1}{2}\ge5,$
	and at most three nonzero residue classes have multiplicity different from $1$, there exists
	$b_0\in\mathbb Z_q^*\setminus A_q$ such that $\mu_{b_0}^{(q)}=1.$
	By Lemma~\ref{qre}(ii),
	\[
	\left|V_{b_0}^{(q)}\right|
	\ge
	\frac{p-3}{2}
	\ge2.
	\]
	
	The construction of $D_q^0$ selects exactly one element $d\in V_{b_0}^{(q)}$.
	Choose another element $d'\in V_{b_0}^{(q)}\setminus\{d\}.$
	By Lemma~\ref{qre}(iii), $[d']_p\ne[d]_p.$
	
	If $\sum_{e\in D_q^0}e\not\equiv0\pmod p,$ set $D_q=D_q^0$.
	Otherwise, define
	\[
	D_q
	=
	\left(D_q^0\setminus\{d\}\right)\cup\{d'\}.
	\]
	In the latter case,
	\[
	\sum_{e\in D_q}e
	\equiv
	d'-d
	\not\equiv0\pmod p.
	\]
	Since $d$ and $d'$ have the same residue modulo $q$, the replacement
	does not change the residue counts in
	\eqref{eq:Dq-count} or the congruence modulo $q$ in
	\eqref{eq:Dq-sum-q}. Thus, in either case,
	\begin{equation}
		\label{eq:Dq-sums}
		\sum_{d\in D_q}d\equiv0\pmod q
		\qquad\text{and}\qquad
		\sum_{d\in D_q}d\not\equiv0\pmod p.
	\end{equation}
	Moreover, $M_p\subseteq D_q\subseteq S\setminus D_p$ and $|D_q|=q$.
	
	Recall that
	\[
	\boldsymbol{\rho}^{(q)}
	=
	\bigl(\rho_1^{(q)},\ldots,\rho_q^{(q)}\bigr)
	\]
	is the Hamiltonian difference sequence chosen in
	Lemma~\ref{cs}. By \eqref{eq:Dq-count}, which remains valid
	for $D_q$, its elements can be ordered as $e_1,\ldots,e_q$
	so that
	\begin{equation}
		\label{eq:Dq-order}
		[e_i]_q=\rho_i^{(q)}
		\qquad
		(1\le i\le q).
	\end{equation}
	
	Define $w_0=0$ and $w_i=\sum_{j=1}^{i}e_j$ for $1\le i\le q.$
	By \eqref{eq:Dq-order}, the residues $[w_0]_q,[w_1]_q,\ldots,[w_{q-1}]_q$
	are pairwise distinct and hence comprise all of $\mathbb Z_q$.
	Therefore,
	\[
	\left\{
	[w_1]_q,\ldots,[w_{q-1}]_q
	\right\}
	=
	\mathbb Z_q^*.
	\]
	The Hamiltonian difference sequence also gives $[w_q]_q=0.$
	By \eqref{eq:Dq-sums}, $[w_q]_p\ne0,$
	so $w_q\ne w_0$. Also, $w_q\ne w_i$ for
	$1\le i\le q-1$, because $[w_q]_q=0$ whereas $[w_i]_q\ne0$.
	Thus $w_0,w_1,\ldots,w_q$ are pairwise distinct.
	
	The differences $e_1,\ldots,e_q$ are distinct elements of $S$.
	Hence
	\[
	P_q=(w_0,w_1,\ldots,w_q)
	\]
	is a directed path in $T_{p,q}$.
	Finally, \eqref{eq:Dq-sums} gives $q\mid w_q$ and $p\nmid w_q,$ and therefore $\gcd(w_q,pq)=q.$
	It follows that $P_q$ is a directed base path of length $q$ with $\Delta(P_q)=D_q.$
	Thus $D_q$ is a sufficient difference set of type $q$.

\end{proof}

Lemmas~\ref{lem:construct-Dp} and~\ref{lem:construct-Dq} establish
Proposition~\ref{prop:long-difference-sets}.

\section{Proof of the main theorem}
\label{sec:main-proof}

\begin{proof}[Proof of Theorem~\ref{thm:main}]
	By Proposition~\ref{prop:long-difference-sets}, choose disjoint
	sufficient difference sets $D_p,D_q\subseteq S$. Define
	$R=S\setminus(D_p\cup D_q)$.
	Since $M_p\cup M_q$ contains all nonunit differences in $S$, every
	element of $R$ is a unit of $\mathbb Z_{pq}$. Therefore, for each
	$u\in R$, the singleton $\{u\}$ is a sufficient difference set of
	type $1$, realized by the directed base path $P_u=(0,u).$
	
	It follows that
	\[
	\mathcal D
	=
	\{D_p,D_q\}
	\cup
	\left\{
	\{u\}:u\in R
	\right\}
	\]
	is a partition of $S$ into sufficient difference sets. Hence
	$\mathcal D$ is a complete system of sufficient difference sets for
	$S$. By Corollary~\ref{css}, the corresponding cyclic developments
	form a cyclic Hamilton cycle decomposition of $T_{p,q}$.

\end{proof}

\section{Concluding remarks}

The proof separates the two main sources of difficulty in
Problem~\ref{prob:carousel}. The prime-field sequences control the
multiplicity and order of the residue classes along the two long base
paths, while the matching and replacement arguments lift this local
information to $\mathbb Z_{pq}$ and enforce the endpoint conditions.

The restriction $p\ge7$ is used only to guarantee enough remaining unit
differences in the construction of $D_q$. The remaining products $pq$
with $p\in\{3,5\}$, and more generally all odd composite orders, remain
open.


\section{Acknowledgements}
This work was supported by the National Natural Science
Foundation of China(12301449), and the Scientific Innovation Project for Young Scientists in Shandong Provincial Universities(2024KJG014), the Natural Science Foundation of Henan Province(252300421489), the Young Backbone Teachers of Higher Education Institutions in Henan Province(2024GGJS045), the Key Research Project of Higher Education Institutions in Henan Province(25B110015).

\bibliographystyle{unsrtnat}

\end{document}